\def \dis {\displaystyle}
\def \NN {\mathbb N}
\def \RR {\mathbb R}
\def \CC {\mathbb C}
\def \E {\mathcal{E}}
\def \L {\mathcal{L}}
\def \R {\mathcal{R}}
\def \ecart {\noalign{\medskip}}
\theoremstyle{definition}
\newtheorem{Th}{Theorem}[section]
\newtheorem{Prop}[Th]{Proposition}
\newtheorem{Lem}[Th]{Lemma}
\newtheorem{Cor}[Th]{Corollary}
\newtheorem{Def}[Th]{Definition}
\newtheorem{Rem}[Th]{Remark}
\def \refT #1{Theorem~\ref{#1}}
\def \refL #1{Lemma~\ref{#1}}
\def \refC #1{Corollary~\ref{#1}}
\def \refP #1{Proposition~\ref{#1}}
\def \refR #1{Remark~\ref{#1}}
\title{A biharmonic transmission problem in $L^p$-spaces}
\author{Alexandre Thorel \\ \ecart
{\footnotesize Normandie Univ, UNIHAVRE, LMAH, FR-CNRS-3335, ISCN, 76600 Le Havre, France.} \\ \ecart alexandre.thorel@univ-lehavre.fr}
\date{}
\begin{document}
\maketitle

\begin{abstract}
In this work we study, by a semigroup approach, a transmission problem based on biharmonic equations with boundary and transmission conditions, in two juxtaposed habitats. We give a result of existence and uniqueness of the classical solution in $L^p$-spaces, for $p \in (1,+\infty)$, using analytic semigroups and operators sum theory in Banach spaces. To this end, we invert explicitly the determinant operator of the transmission system in $L^p$-spaces using the $\E_{\infty}$-calculus and the Dore-Venni sums theory. 

\noindent\textbf{Key Words and Phrases}: Analytic semigroups, biharmonic equations, functional calculus, interpolation spaces, maximal regularity. 

\noindent\textbf{2020 Mathematics Subject Classification}: 35B65, 35J48, 35R20, 47A60, 47D06. 
\end{abstract}

\section{Introduction}

In this work, we consider a system of linear biharmonic equations posed on two juxtaposed domains and coupled through transmission conditions at the interface. Throughout the paper we shall impose the continuity of the flux, of the dispersal and of its flux across the interface. Using an operator approach, we investigate the existence, uniqueness as well as maximal $L^p$-regularity for such a problem.

Transmission problems arise in various applicative fields including engineering, physics and biology. Here, we refer the reader for instance to \cite{JMAA 1}, \cite{hassan} or \cite{JMAA 2} for applications in plate theory, to \cite{JEE 2}, \cite{perfekt} or \cite{JDE} for applications in electromagnetism and to \cite{menad}, \cite{maelis} or \cite{DCDS-B} for other applications in population dynamics. Let us also mention that mathematical models involving biharmonic operators also arise in various fields such as elasticity for instance see \cite{elasticity 1}, \cite{elasticity 2} or \cite{IJPAM}, electrostatic see \cite{CV 1}, \cite{electrostatic 1} or \cite{electrostatic 2}, plate theory \cite{JMAA 1}, \cite{hassan} or \cite{JMAA 2} or population dynamics \cite{cohen-murray}, \cite{LMMT}, \cite{LLMT} or \cite{ochoa}.

In this work, we consider an $n$-dimensional (with $n\geqslant 2$) straight cylinder of the form 
$$\Omega=(a,b)\times\omega,$$ 
where $a<b$ are two given real numbers while the section $\omega\subset \RR^{n-1}$ denotes a smooth bounded domain. This cylinder $\Omega$ is split into two (open) sub-cylinders $\Omega_\pm$ and an interface $\Gamma$ given for some $\gamma \in (a,b)$ by
$$\Omega_-=(a,\gamma)\times \omega,\quad \Omega_+=(\gamma,b)\times\omega\quad \text{and} \quad \Gamma=\{\gamma\}\times \omega,$$
so that $\Omega=\Omega _{-}\cup \Gamma \cup\Omega _{+}$.

We consider the following biharmonic equations,
\begin{equation*}
(EQ_{pde})\left\{ 
\begin{array}{c}
k_{-}\Delta ^{2}u_{-} = g_{-}\text{ \ \ in \ \ }\Omega _{-}
\\ \ecart
\ k_{+}\Delta ^{2}u_{+} = g_{+}\text{ \ \ in \ \ }\Omega _{+},%
\end{array}%
\right.
\end{equation*}
where $g_- \in L^p(\Omega_-)$, $g_+ \in L^p(\Omega_+)$ are given and $k_+,k_- >0$. 

We denote by $(x,y)$ the spatial variables with $x\in (a,b)$ and $y\in \omega $. Then, we consider the following conditions on $\partial \Omega \setminus \Gamma$, the lateral boundary of $\Omega$, 
\begin{equation*}
(BC_{pde})\left\{\hspace{-0.2cm} \begin{array}{l}
(1)\left\{\begin{array}{rcllrcll}
u_{-}(x,\zeta)&\hspace{-0.2cm}=&\hspace{-0.2cm}0, &x\in (a,\gamma),~ \zeta \in \partial\omega, & u_{+}(x,\zeta)&\hspace{-0.2cm}=&\hspace{-0.2cm}0, & x\in (\gamma ,b),~ \zeta \in \partial\omega \\ 
\dis \Delta u_- (x,\zeta)&\hspace{-0.2cm}=&\hspace{-0.2cm}0, & x\in (a,\gamma),~ \zeta \in \partial\omega, & \dis \Delta u_+ (x,\zeta)&\hspace{-0.2cm}=&\hspace{-0.2cm}0, & x\in (\gamma ,b),~ \zeta \in \partial\omega
\end{array}\right. \\ \ecart
(2) \left\{\begin{array}{rclrrclr}
u_{-}(a,y)&=&\varphi _{1}^{-}(y), & u_{+}(b,y)&=&\varphi _{1}^{+}(y), &y \in \omega \\ 
\dis\frac{\partial u_{-}}{\partial x}(a,y) &=& \varphi_{2}^{-}(y), & \dis\frac{\partial u_{+}}{\partial x}(b,y) &=& \varphi _{2}^{+}(y), & y\in \omega,
\end{array}\right.\end{array}\right.
\end{equation*}
where $\varphi_1^\pm$ and $\varphi_2^\pm$ will be given in appropriated spaces. The system is coupled on the interface $\Gamma$ where we impose the following continuity conditions,
\begin{equation*}
(TC_{pde})\left\{ 
\begin{array}{rcll}
u_{-}&=&u_{+}&\text{on \ }\Gamma \\ 
\dis\frac{\partial u_{-}}{\partial x}&=&\dis\frac{\partial u_{+}}{\partial x} &\text{on \ }\Gamma \medskip \\ 
k_{-}\Delta u_{-}&=&k_{+}\Delta u_{+} & \text{on \ }\Gamma \\ 
\dis k_{-} \frac{\partial \Delta u_{-}}{\partial x} & = & \dis k_{+} \frac{\partial \Delta u_{+}}{\partial x} & \text{on \ }\Gamma.
\end{array}%
\right.
\end{equation*}

Let us now explain, for instance, in population dynamics framework, the boundary and transmission conditions.

The first line of $(BC_{pde})-(1)$, means that the individuals could not lie on the boundaries $(a,b) \times \partial \omega $, because, for instance, they die or the edge is impassable. The second line means that there is no dispersal in the normal direction. We deduce that the dispersal vanishes on $(a,b) \times \partial \omega$.

In $(BC_{pde}) - (2)$, the population density and the flux are given on $\{a,b\} \times \omega$. This means that the habitats are not isolated.

In $(TC_{pde})$, the two first transmission conditions mean the continuity of the density and its flux at the interface, while the two second express, in some sense, the continuity of the dispersal and its flux at $\Gamma$.

This work is a natural continuation of that done in \cite{thorel}. Moreover, it completes the study realized in \cite{LLMT} where the authors have considered equations
\begin{equation*}
k_\pm \Delta^2 u_\pm - l_\pm \Delta u_\pm = f_\pm \text{ \ \ in \ \ }\Omega_\pm
\end{equation*}
and on the interface $\Gamma$, the last condition of $(TC_{pde})$ is replaced by
$$\frac{\partial }{\partial x}\left( k_+ \Delta u_+ - l_+ u_+\right) 
\text{\ \ on \ \ }\Gamma.$$
In the present work, $l_\pm = 0$, which must be treated differently, since the proof in \cite{LLMT} uses the representation formula obained in \cite{LMMT} which is not defined when $l_\pm = 0$. Thus, we use the representation formula obtained in \cite{thorel} which is different and does not corresponds to the representation formula in \cite{LMMT} when the limit of $l_\pm$ tends to $0$. Therefore, even if the proof follows the same steps than the one in \cite{LLMT}, the calculus are different and cannot be deduce from those in \cite{LLMT}, since they use a different representation formula.

Note that, due to the transmission conditions, we can not obtain a solution $u \in W^{4,p}(\Omega)$ in all $\Omega$. We only can obtain a solution $u$ such that $u_{|_{\Omega_-}} \in W^{4,p}(\Omega_-)$ and $u_{|_{\Omega_+}} \in W^{4,p}(\Omega_+)$.

The paper is organized as follows. 

First, in section~\ref{Section Operational formulation}, we recall the PDE transmission problem $(P_{pde})$ and we rewrite it under operational form. Then, in section~\ref{Section Statement}, we recall some definitions about BIP operator and interpolation spaces. We give our hypotheses and their consequences. We present our main result in \refT{Th principal} and \refC{CorAppli} which states existence and uniqueness of the solution of problem $(P_{pde})$ that is $(EQ_{pde})-(BC_{pde})-(TC_{pde})$ quoted above. In section~\ref{Section Proof prel res}, we state technical results which allow us to prove our main result. Section~\ref{Section proof main res}, is devoted to the proof of \refT{Th principal}.

\section{Operational formulation}\label{Section Operational formulation}

In this section, we first recall the PDE problem $(P_{pde})$ composed by $(EQ_{pde})-(BC_{pde})-(TC_{pde})$. Then, we define the Laplace operator $A_0$ and we use it to rewrite $(P_{pde})$. Note that this operational form is a vector values problem. Finally, we generalize this problem replacing $A_0$ by a more general operator $A$. We consider the following problem
\begin{equation*}
(P_{pde})\left\{ \begin{array}{lcllcll}
k_- \Delta^2 u_- & = & g_- &&&& \text{in } \Omega_- \\ 
k_+ \Delta^2 u_+ & = & g_+ &&&& \text{in } \Omega_+ \\ \ecart

u_{-}(x,\zeta) & = & 0, & \Delta u_- (x,\zeta) & = & 0, & x\in~(a, \gamma),~\zeta \in \partial\omega \\ 
u_{+}(x,\zeta) & = & 0, & \Delta u_+ (x,\zeta) & = & 0, & x\in~(\gamma ,b),~\zeta \in \partial\omega  \vspace{0.1cm}\\ 

u_-(a,y) & = &\varphi_1^-(y), &\dis u_+(b,y) & = & \varphi_1^+(y), & y\in \omega \vspace{0.1cm}\\ 
\dis \frac{\partial u_-}{\partial x}(a,y) & = & \varphi_2^-(y), &\dis \frac{\partial u_+}{\partial x}(b,y) & = & \varphi_2^+(y), & y \in \omega \\ \ecart

u_- &=& u_+ &&&& \text{on } \Gamma \vspace{0.1cm}\\ 
\dis\frac{\partial u_-}{\partial x} &=& \dfrac{\partial u_+}{\partial x} &&&& \text{on } \Gamma \vspace{0.15cm}\\ 
k_- \Delta u_- & = & k_+ \Delta u_+ &&&& \text{on } \Gamma \vspace{0.1cm} \\
k_- \dfrac{\partial \Delta u_-}{\partial x} & = & k_+ \dfrac{\partial \Delta u_+}{\partial x} &&&& \text{on }\Gamma.
\end{array}\right.
\end{equation*}
Let us define $A_0$, the Dirichlet Laplace operator in $\RR^{n-1}$, $n \in \NN\setminus\{0,1\}$, as follows
\begin{equation}\label{def A0}
\left\{\begin{array}{l}
D(A_0) := \{\psi \in W^{2,p}(\omega) : \psi = 0 ~\text{on} ~\partial \omega\} \\ \ecart
\forall \psi \in D(A_0), \quad A_0 \psi = \Delta_y \psi.
\end{array}\right.
\end{equation}
Thus, using operator $A_0$, problem $(P_{pde})$ becomes the following vector valued problem
\begin{equation*}
\left\{\begin{array}{l}
\begin{array}{lll}
\dis u_-^{(4)}(x) + 2A_0 u''_- (x) + A^2_0 u_-(x) & = & f_-(x), \quad \text{for a.e. } x \in (a,\gamma) \\ \ecart
\dis u_+^{(4)}(x) + 2A_0 u''_+ (x) + A^2_0 u_+(x) & = & f_+(x), \quad \text{for a.e. } x \in (\gamma, b)
\end{array} \\ \ecart
\begin{array}{llllll}
u_- (a) & = & \varphi_1^-, \quad u_+ (b) & = & \varphi_1^+ \\ \ecart
u'_-(a) & = & \varphi_2^-, \quad u'_+(b) & = & \varphi_2^+ \\ \ecart
u_-(\gamma) & = & u_+(\gamma) \\ \ecart
u'_-(\gamma) & = & u'_+(\gamma) 
\end{array} \\ \ecart
\begin{array}{lll} 
k_- u''_-(\gamma)  ~+  k_- A_0 u_-(\gamma) & = &  k_+ u''_+(\gamma) ~ +  k_+ A_0 u_+(\gamma)\\ \ecart
k_- u_-^{(3)}(\gamma)  +  k_- A_0 u'_-(\gamma) & = & k_+ u_+^{(3)}(\gamma)  + k_+ A_0 u'_+(\gamma),
\end{array}\end{array}\right.
\end{equation*}
where 
$$f_- \in L^p(a,\gamma;L^p(\omega)), \quad f_+ \in L^p(\gamma,b;L^p(\omega)) \quad \text{and}\quad p \in (1, +\infty),$$
with
$$u_\pm(x):=u(x,\cdot) \quad\text{and} \quad f_\pm(x):=g_\pm(x,\cdot)/k_\pm.$$ 

Note that the boundary conditions on $\partial\omega$ in $(P_{pde})$ do not appear in the previous system since they are already included in the domain of $A_0$. Thus, a classical solution of this problem satisfies the boundary conditions on $\partial\omega$ in $(P_{pde})$.

Then, using operator $(A,D(A))$ instead of $(A_0,D(A_0))$ and $X$ instead of $L^p(\omega)$, we can write that $f_- \in L^p(a,\gamma;X)$ and $f_+ \in L^p(\gamma,b;X)$. 

Moreover, in all the sequel, we will study the following more general transmission problem :
\begin{equation*}
\text{(P)} \left\{\begin{array}{l}
(EQ)\left\{\begin{array}{lll}
\dis u_-^{(4)}(x) + 2A u''_- (x) + A^2 u_-(x) & = & f_-(x), \quad \text{for a.e. } x \in (a,\gamma) \\ \ecart
\dis u_+^{(4)}(x) + 2A u''_+ (x) + A^2 u_+(x) & = & f_+(x), \quad \text{for a.e. } x \in (\gamma, b)
\end{array}\right.\\ \ecart
(BC) \left\{\begin{array}{llllll}
u_- (a) & = & \varphi_1^-, \quad u_+ (b) & = & \varphi_1^+ \\ \ecart
u'_-(a) & = & \varphi_2^-, \quad u'_+(b) & = & \varphi_2^+
\end{array}\right.\\ \ecart
(TC) \left\{\begin{array}{lll}
\hspace*{-0.15cm}\begin{array}{lll}
u_-(\gamma)  & = & u_+(\gamma) \\ \ecart
u'_-(\gamma) & = & u'_+(\gamma) 
\end{array}\\ \ecart 
k_- u''_-(\gamma) ~ +  k_- Au_-(\gamma)& = & k_+ u''_+(\gamma) ~ +  k_+ Au_+(\gamma) \\ \ecart
k_- u_-^{(3)}(\gamma)  +  k_- Au'_-(\gamma) & = & k_+ u_+^{(3)}(\gamma)  + k_+ Au'_+(\gamma).
\end{array}\right.\end{array}\right.
\end{equation*}
The transmission conditions $(TC)$ will be split into 
$$(TC1)\left\{\begin{array}{lll}
u_-(\gamma) & = & u_+(\gamma) \\ \ecart
u'_-(\gamma) & = & u'_+(\gamma),
\end{array}\right.$$
and
$$(TC2)\left\{\begin{array}{lll}
k_- u''_-(\gamma) ~ +  k_- Au_-(\gamma)& = & k_+ u''_+(\gamma) ~ +  k_+ Au_+(\gamma) \\ \ecart
k_- u_-^{(3)}(\gamma)  +  k_- Au'_-(\gamma) & = & k_+ u_+^{(3)}(\gamma)  + k_+ Au'_+(\gamma).
\end{array}\right.$$
Note that $(TC2)$ is well defined from \refL{remtrace}, see Section \ref{sect interp} below. 

We will search a classical solution of problem (P), that is a solution $u$ such that 
\begin{equation}\label{u-=u(a,gamma),u+=u(gamma,a)}
\left\{\begin{array}{ll}
u_-:= u_{|(a,\gamma)} \in W^{4,p}(a,\gamma;X)\cap L^p(a,\gamma;D(A^2)), & u''_- \in L^p(a,\gamma;D(A)), \\ \ecart

u_+:= u_{|(\gamma,b)} \in W^{4,p}(\gamma,b;X)\,\cap L^p(\gamma,b;D(A^2)), & u''_+ \in L^p(\gamma,b;D(A)),
\end{array}\right.
\end{equation}
and which satisfies $(EQ)-(BC)-(TC)$. 

\section{Assumptions, consequences and statement of results}\label{Section Statement}

\subsection{The class BIP$\mathbf{(X,\theta)}$}

Throughout the article, $(X,\|\cdot\|)$ is a complex Banach space. We give some definitions about UMD spaces, sectorial operators and BIP operators.

\begin{Def}
A Banach space $X$ is a UMD space if and only if for all $p\in(1,+\infty)$, the Hilbert transform is bounded from $L^p(\RR,X)$ into itself (see \cite{bourgain} and \cite{burkholder}).
\end{Def}
\begin{Def}
A closed linear operator $T_1$ is called sectorial of angle $\theta \in [0,\pi)$ if
$$
\begin{array}{l}
i)\quad \sigma(T_1)\subset \overline{S_{\theta}},\\ \ecart
ii)\quad\forall~\theta'\in (\theta,\pi),\quad \sup\left\{\|\lambda(\lambda\,I-T_1)^{-1}\|_{\L(X)} : ~\lambda\in\CC\setminus\overline{S_{\omega'}}\right\}<+\infty,
\end{array}
$$
 where
\begin{equation}\label{defsector}
S_\omega\;:=\;\left\{\begin{array}{lll}
\left\{ z \in \CC : z \neq 0 ~~\text{and}~~ |\arg(z)| < \omega \right\} & \text{if} & \omega \in (0, \pi), \\ \ecart
\,(0,+\infty) & \text{if} & \omega = 0,
\end{array}\right.
\end{equation} 
see \cite{haase}, p. 19, such an operator is noted $A \in \text{Sect}\,(\omega)$.
\end{Def}
\begin{Rem}
From \cite{komatsu}, p. 342, we know that any injective sectorial operator~$T_1$ admits imaginary powers $T_1^{is}$, $s\in\RR$, but, in general, $T_1^{is}$ is not bounded.
\end{Rem} 
\begin{Def}
Let $\theta \in[0, \pi)$. We denote by BIP$(X,\theta)$, the class of sectorial injective operators $T_1$ such that
\begin{itemize}
\item[] $i) \quad ~~\overline{D(T_1)} = \overline{R(T_1)} = X,$

\item[] $ii) \quad ~\forall~ s \in \RR, \quad T_1^{is} \in \L(X),$

\item[] $iii) \quad \exists~ C \geq 1 ,~ \forall~ s \in \RR, \quad ||T_1^{is}||_{\L(X)} \leq C e^{|s|\theta}$,
\end{itemize}
see~\cite{pruss-sohr}, p. 430.
\end{Def}
\begin{Rem}
From \cite{haase}, proof of Proposition 3.2.1, c), p. 71, $\overline{D(T_1) \cap R(T_1)} = X$.
\end{Rem} 

\subsection{Interpolation spaces}\label{sect interp}

Here we recall a definition given, for instance, in \cite{da prato-grisvard}, \cite{grisvard}, \cite{lions-peetre} or in \cite{triebel} and a result from \cite{grisvard} concerning real interpolation spaces. 
\begin{Def}
Let $T_2 : D(T_2) \subset X \longrightarrow X$ be a linear operator such that
\begin{equation}\label{def T2}
(0,+\infty) \subset \rho(T_2)\quad \text{and}\quad \exists~C>0:\forall~t>0, \quad \|t(T_2-tI)^{{\mbox{-}}1}\|_{\L(X)} \leqslant C.
\end{equation}
Let $k \in \NN \setminus \{0\}$, $\theta \in (0,1)$ and $q \in [1,+\infty]$. We will use the real interpolation spaces 
$$(D(T_2^k),X)_{\theta,q} = (X,D(T_2^k))_{1{\mbox{-}}\theta,q},$$ 
defined, for instance, in \cite{lions-peetre}, or in \cite{lunardi}. 

In particular, for $k=1$, we have the following characterization
$$(D(T_2),X)_{\theta,q} := \left\{\psi \in X:t\longmapsto t^{1{\mbox{-}}\theta} \|T_2(T_2-tI)^{{\mbox{-}}1}\psi\|_X \in L_*^q(0,+\infty)\right\},$$
where $L_*^q(0,+\infty)$ is given by
$$L_*^q(0,+\infty) := \left\{f \in L^q(0,+\infty) : \left(\int_0^{+\infty} \left\|f(t)\right\|^q\frac{dt}{t}\right)^{1/q} < + \infty \right\}, \quad \text{for } q \in [1,+\infty),$$
and for $q = +\infty$, by
$$L_*^\infty(0,+\infty;\CC) := \left\{f \text{ measurable on } (0,+\infty): \sup_{t \in (0,+\infty)} |f(t)| < + \infty\right\},$$
see \cite{da prato-grisvard} p. 325, or \cite{grisvard}, p. 665, Teorema 3, or section 1.14 of \cite{triebel}, where this space is denoted by $(X, D(T_2))_{1{\mbox{-}}\theta,q}$. Note that we can also characterize the space $(D(T_2),X)_{\theta,q}$ taking into account the Osservazione, p. 666, in \cite{grisvard}.

We set also, for any $k \in \NN\setminus\{0\}$
\begin{equation*}
(D(T_2),X)_{k+\theta,q}\;:=\;\left\{\psi\in D(T_2^k) : T_2^k\psi\in (D(T_2),X)_{\theta,q}\right\},
\end{equation*}
\begin{equation*}
(X,D(T_2))_{k+\theta,q}\;:=\;\left\{\psi\in D(T_2^k) : T_2^k\psi\in (X,D(T_2))_{\theta,q}\right\}.
\end{equation*}

\end{Def}
We recall the following lemma.
\begin{Lem}[\cite{grisvard}]\label{remtrace}
Let $T_2$ be a linear operator satisfying \eqref{def T2}. Let $u$ such that
$$u \in W^{n,p}(a_1,b_1;X) \cap L^p(a_1,b_1;D(T_2^k)),$$ 
where $a_1,b_1 \in \RR$ with $a_1<b_1$, $n,k \in \NN\setminus\{0\}$ and $p \in (1,+\infty)$. Then for any $j \in \NN$ satisfying the Poulsen condition $0<\frac{1}{p}+j < n$ and $s \in \{a_1,b_1\}$, we have
$$
u^{(j)}(s) \in (D(T_2^k),X)_{\frac{j}{n}+\frac{1}{np},p}.
$$
\end{Lem}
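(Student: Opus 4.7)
My plan is to reduce to a canonical half-line situation and then apply the trace characterization of real interpolation spaces in the spirit of Lions--Peetre and Grisvard. Multiplying $u$ by a cutoff $\eta \in C_c^\infty(\RR)$ equal to $1$ near the trace point $s$ and extending by zero, it suffices to treat the model case $a_1 = 0$, $b_1 = +\infty$, $s = 0$, with a compactly supported function $u \in W^{n,p}(0, +\infty; X) \cap L^p(0, +\infty; D(T_2^k))$.

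Next I would pin down the interpolation index by a scaling argument. For $\lambda > 0$, set $u_\lambda(t) := u(\lambda t)$. A change of variables gives
$$\|u_\lambda\|_{L^p(\RR_+; D(T_2^k))} = \lambda^{-1/p}\|u\|_{L^p(\RR_+; D(T_2^k))}, \quad \|u_\lambda^{(n)}\|_{L^p(\RR_+; X)} = \lambda^{n-1/p}\|u^{(n)}\|_{L^p(\RR_+; X)},$$
while $u_\lambda^{(j)}(0) = \lambda^j u^{(j)}(0)$. Optimizing in $\lambda$ in the target trace inequality yields a Gagliardo--Nirenberg-type bound in which the $X$-exponent equals $\theta = j/n + 1/(np)$; this is the interpolation index claimed, and the Poulsen condition $0 < 1/p + j < n$ is exactly what ensures $\theta \in (0,1)$.

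The core of the proof is to bound the $K$-functional of $\psi := u^{(j)}(0)$ in the couple $(D(T_2^k), X)$. For each $t > 0$ I would split $\psi = v(t) + w(t)$ with $v(t) \in D(T_2^k)$ and $w(t) \in X$, taking $v(t)$ as a suitable weighted average of $u$ over an interval of length of order $t$ near $0$ (obtained by integrating $u$ against a scaled test function designed to reproduce $\psi$ as $t \to 0^+$ after $j$ integrations by parts). Then $v(t)$ inherits the $D(T_2^k)$-regularity from $u \in L^p(0, +\infty; D(T_2^k))$, while $w(t) = \psi - v(t)$ is controlled by the higher derivatives $u^{(j+1)}, \dots, u^{(n)}$ on $(0, t)$ via Taylor expansion and the fundamental theorem of calculus. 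The required $L^p_*$-integrability in $t$ of the resulting $K$-functional bound then follows from a weighted Hardy inequality, with the weights tuned precisely to $\theta$.

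The main obstacle I anticipate is the careful bookkeeping of the intermediate derivatives $u^{(j+1)}, \dots, u^{(n-1)}$ produced by the integrations by parts, and arranging the weights so that Hardy's inequality applies with exactly the exponent dictated by the scaling computation. Since the statement is explicitly attributed to Grisvard in the excerpt, I would then verify that my construction matches, possibly up to a minor reformulation, the one given in the cited reference and conclude.
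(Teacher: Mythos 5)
The paper does not actually prove this lemma: it is stated as a quoted result and the ``proof'' consists of the single sentence referring the reader to Grisvard's \emph{Spazi di tracce e applicazioni}, p.~678, Teorema~2'. So there is nothing in the paper to compare your argument against line by line; what can be said is that your plan is precisely the classical trace-method argument that underlies the cited reference. Your reduction to the half-line model case via a cutoff is legitimate (note that $j\le n-1$ follows from the Poulsen condition, so $u^{(j)}$ does have a continuous trace at the endpoint), and your scaling computation correctly identifies the exponent: optimizing $\lambda^{-j-1/p}\|u\|_{L^p(D(T_2^k))}+\lambda^{\,n-j-1/p}\|u^{(n)}\|_{L^p(X)}$ gives the $X$-side weight $\theta=\frac{j}{n}+\frac{1}{np}$, and $0<\frac{1}{p}+j<n$ is exactly $\theta\in(0,1)$. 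The $K$-functional strategy --- take $v(t)$ to be a mollified average of $u$ at scale $t$ (which lands in $D(T_2^k)$ by closedness of $T_2^k$ and $T_2^ku\in L^p$), control $\psi-v(t)$ by the derivatives of $u$ on $(0,t)$, and close with a weighted Hardy inequality --- is the standard route and works for an arbitrary Banach couple, so the resolvent hypothesis \eqref{def T2} plays no essential role beyond making the interpolation space meaningful in the paper's chosen characterization. The only honest criticism is that your write-up is a plan rather than a proof: the construction of the reproducing kernel for $u^{(j)}(0)$, the bookkeeping of the intermediate derivatives, and the verification that the Hardy weights match $\theta$ are all announced but not executed. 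Since the paper itself delegates exactly these computations to Grisvard, your proposal is consistent with, and no less complete than, what the paper provides; carrying out the $K$-functional estimate in full would be the remaining work.
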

This result is proved in \cite{grisvard}, p. 678, Teorema 2'.

\subsection{Hypotheses}\label{sect hyp}

Througout this article, $k_+,k_- \in \RR_+ \setminus \{0\}$ and $A$ denotes a closed linear operator in $X$.
 
We assume the following hypotheses:
\begin{center}
\begin{tabular}{l}
$(H_1)\quad$ $X$ is a UMD space,\\ \ecart
$(H_2)\quad$ $0 \in \rho(A)$,\\ \ecart
$(H_3)\quad$ $-A \in$ BIP$(X,\theta_A)$ for some $\theta_A \in (0,\pi/2)$, \\ \ecart
$(H_4)\quad$ $-A \in$ Sect$(0)$. 
\end{tabular}
\end{center}
Note that assumption $(H_4)$ means that $-A$ is a sectorial operator of any angle $\theta \in (0, \pi)$. This assumption is satisfied, for instance, by elliptic differential operators of second order. Now, we give some consequences on our hypotheses.

\subsection{Consequences}\label{Consequences}

\begin{enumerate}
\item Note that $A_0$ satisfies all the previous hypotheses with $X=L^q(\omega)$, with $q \in (1, +\infty)$. From \cite{rubio}, Proposition~3, p. 207, $X$ satisfies $(H_1)$ and from \cite{gilbarg-trudinger}, Theorem~9.15, p.~241 and Lemma~9.17, p.~242, $A_0$ satisfies~$(H_2)$. Moreover, from \cite{pruss-sohr2}, Theorem~C, p.~166-167, $(H_3)$ is satisfied for every $\theta_A \in (0,\pi)$, thus $(H_4)$ is also satisfied.

\item In the scalar case, to solve each equation of $(EQ)$, it is necessary to introduce the square roots $\pm \sqrt{-A}$ of the characteristic equations 
$$x^4 + 2A x^2 + A^2 = 0,$$ 
this is why, in our operational case, we consider,
\begin{equation}\label{defM}
M\;:=\; -\sqrt{-A}.
\end{equation}
From $(H_3)$, $-A$ is a sectorial operator, thus the existence of $M$ is ensured, see for instance \cite{haase}, e), p. 25.

\item From $(H_3)$, we have $-A \in$ BIP$(X,\theta_A)$, then, from \cite{haase}, Proposition $3.2.1,$ e), p. 71, we deduce
$$-M \in\text{BIP}(X,\theta_A/2).$$ 
Since $0 < \theta_A/2 < \pi/2$, we deduce that $M$ generate a bounded analytic semigroup $(e^{xM})_{x \geqslant 0}$, see \cite{pruss-sohr}, Theorem $2$, p. 437. Furthermore, due to \cite{pruss-sohr}, Theorem $4$, p. 441, for $n \in \NN \setminus\{0\}$, we get $-nM \in \text{BIP}(X,\theta_A/4+\varepsilon)$, for any $\varepsilon \in (0,\pi/2-\theta_A/4)$. Then, due to \cite{pruss-sohr}, Theorem $2$, p. 437, $nM$ generate a bounded analytic semigroup $(e^{nxM})_{x \geqslant 0}$. The last results use also the works of \cite{da prato-grisvard} and \cite{dore-venni}.
\end{enumerate}

\subsection{The main results}

To solve our transmission problem (P), we introduce two auxiliary problems:
$$(P_-)\left\{\begin{array}{l}
u_-^{(4)}(x) + 2A u''_- (x) + A^2 u_-(x) = f_-(x), \quad \text{for a.e. } x \in (a, \gamma) \\ \ecart
\hspace{-0.15cm}\begin{array}{lllllll}
u_-(a) & = & \varphi_1^-, & u_-(\gamma) & = & \psi_1 \\ \ecart
u'_-(a) & = & \varphi_2^-, & u'_-(\gamma) & = & \psi_2,
\end{array}\end{array}\right.$$
and
$$(P_+)\left\{\begin{array}{l}
u_+^{(4)}(x) + 2A u''_+ (x) + A^2 u_+(x) = f_+(x), \quad \text{for a.e. } x \in (\gamma, b) \\ \ecart
\hspace{-0.15cm}\begin{array}{lllllll}
u_+(\gamma) & = & \psi_1, & u_+(b) & = & \varphi_1^+ \\ \ecart
u'_+(\gamma) & = & \psi_2, & u'_+(b) & = & \varphi_2^+.
\end{array}\end{array}\right.$$

\begin{Rem}\label{Rem P+ P-} 
Recall that a classical solution of $(P_\pm)$, in $L^p(J;X)$, with $J=(a,\gamma)$ or $(\gamma,b)$, is a solution to $(P_\pm)$ such that
$$u_\pm \in W^{4,p}(J;X)\cap L^p(J;D(A^2)), \quad u'' \in L^p(J;D(A)).$$
We say that $u$ is a classical solution of (P) if and only if there exist $\psi_1, \psi_2 \in X$ such that
\begin{center}
\begin{tabular}{l}
$~(i)\quad$ ~$u_-$ is a classical solution of $(P_-)$,\\ \ecart
$\,(ii)\quad$ \,$u_+$ is a classical solution of $(P_+)$,\\ \ecart
$(iii)\quad$ $u_-$ and $u_+$ satisfy $(TC2)$.
\end{tabular}
\end{center}
Note that by construction, if there exist a classical solution $u_-$ of $(P_-)$ and $u_+$ of $(P_+)$, then $u_-$ and $u_+$ satisfy $(TC1)$. 
\end{Rem}
Our goal is to prove that there exists a unique couple $(\psi_1,\psi_2)$ which satisfies $(i)$, $(ii)$ and $(iii)$. This will lead us to obtain our main result. 
\begin{Th} \label{Th principal}
Let $f_- \in L^p(a,\gamma; X)$ and $f_+ \in L^p(\gamma,b; X)$. Assume that $(H_1)$, $(H_2)$ and $(H_3)$ be satisfied. Then, there exists a unique classical solution $u$ of the transmission problem (P) if and only if 
\begin{equation}\label{reg phi +-}
\varphi_1^+, \varphi_1^- \in (D(A),X)_{1 + \frac{1}{2p},p} \quad  \text{and} \quad \varphi_2^+, \varphi_2^- \in (D(A),X)_{1+\frac{1}{2} + \frac{1}{2p},p}.
\end{equation}
\end{Th}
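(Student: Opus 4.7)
I would decouple (P) into the two auxiliary problems $(P_-),(P_+)$ described in \refR{Rem P+ P-} and determine the interface data $(\psi_1,\psi_2)$ from the two remaining transmission conditions $(TC2)$. The ``only if'' direction is obtained directly from the trace lemma: if $u$ is a classical solution of (P), then $u_\pm \in W^{4,p}(J;X)\cap L^p(J;D(A^2))$ with $J=(a,\gamma)$ or $(\gamma,b)$, so \refL{remtrace} applied with $n=4$, $k=2$ and $j\in\{0,1\}$ (both satisfy the Poulsen condition since $p>1$) yields at the outer endpoints
\begin{equation*}
u_-(a)\in(D(A^2),X)_{\frac{1}{4p},p}, \qquad u'_-(a)\in(D(A^2),X)_{\frac{1}{4}+\frac{1}{4p},p},
\end{equation*}
and symmetrically at $b$. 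These real interpolation spaces coincide respectively with $(D(A),X)_{1+\frac{1}{2p},p}$ and $(D(A),X)_{1+\frac{1}{2}+\frac{1}{2p},p}$ by a standard reiteration (matching the regularity indices $2-\frac{1}{2p}$ and $\frac{3}{2}-\frac{1}{2p}$), which gives \eqref{reg phi +-}.

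For the converse, I would argue as follows. Since $M=-\sqrt{-A}$ from \eqref{defM} generates a bounded analytic semigroup $(e^{xM})_{x\geqslant0}$ (Consequence~3 of \refss{Consequences}) and the fourth-order operator factors formally as $(D_x^2+A)^2=(D_x-M)^2(D_x+M)^2$, each auxiliary problem $(P_\pm)$ admits, for any $\psi_1,\psi_2$ in the interpolation spaces of \eqref{reg phi +-}, a unique classical solution given by the explicit representation formula of \cite{thorel}, written in terms of $e^{(x-a)M}, e^{(\gamma-x)M}$ (and the analogues on the right subdomain), multiplications by $M$, and a convolution of $f_-$ against the associated Green kernel. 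Plugging these representations into $(TC2)$ and using $A=-M^2$, I would obtain a $2\times 2$ operator system
\begin{equation*}
\L\begin{pmatrix}\psi_1\\ \psi_2\end{pmatrix}=\begin{pmatrix}F_1\\ F_2\end{pmatrix},
\end{equation*}
in which the right-hand sides $F_1,F_2$ are the known contributions of $f_\pm,\varphi_i^\pm$ and lie in the prescribed interpolation spaces, while the entries of the determinant operator $\L$ are built from $k_\pm$, $M$ and the semigroups $e^{(\gamma-a)M}$, $e^{(b-\gamma)M}$.

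The crux of the proof, and the main obstacle, is therefore to establish the bounded invertibility of $\L$ on the product interpolation space and to show that $\L^{-1}$ preserves this scale. A formal expansion of the determinant produces a principal factor of the form $I-e^{(b-a)M}R\,e^{(b-a)M}$ (with $R$ a bounded operator depending only on $k_\pm$) plus lower-order cofactor terms. Because $M$ generates a bounded analytic semigroup, the exponentials are smoothing and, in the precise sense provided by the $\E_{\infty}$-functional calculus applied to $-A$ and $-M$ (available from $(H_3)$ together with the BIP-transfer recalled in Consequence~3), the principal factor can be inverted by a Neumann-type series. The Dore-Venni theorem then yields the boundedness of the various sum-type operators arising in the cofactors, and a careful tracking of the interpolation exponents $\frac{1}{2p}$ and $\frac{1}{2}+\frac{1}{2p}$ through these operations ensures that $\L^{-1}$ respects the regularity of the right-hand side. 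Once $(\psi_1,\psi_2):=\L^{-1}(F_1,F_2)$ has been obtained at the correct regularity, the patched function $u$ defined piecewise by the representations of $u_\pm$ is a classical solution of (P), and uniqueness follows at once from the injectivity of $\L$ combined with the uniqueness of classical solutions of each $(P_\pm)$. The technical differences with \cite{LLMT} enter exactly at this stage, since the representation formula of \cite{thorel} yields a determinant operator $\L$ distinct from the one arising in \cite{LMMT}, so that the explicit inversion must be redone in the $l_\pm=0$ setting.
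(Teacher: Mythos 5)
Your overall strategy coincides with the paper's: decouple (P) into $(P_\pm)$, use the representation formulas of \cite{thorel} to turn $(TC2)$ into a $2\times 2$ operator system in $(\psi_1,\psi_2)$, invert its determinant, and track the interpolation regularity back through the inverse. Your ``only if'' direction via \refL{remtrace} with $n=4$, $k=2$, $j\in\{0,1\}$ and reiteration is correct and is essentially what the paper does through the necessity parts of \refP{prop sol à gauche M} and \refP{prop sol à droite M}.

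The genuine gap is in the inversion of the determinant, which you yourself identify as the crux. After simplification the paper finds $\det(\Lambda)=-16k_+k_-W^{-2}MF$ with $F=I+\Sigma$, where $\Sigma$ is a finite sum of terms $k_jM^{n_j}e^{\alpha_jM}$. These perturbations are bounded (analytic smoothing of the semigroup) but there is no smallness: the interval lengths $c=\gamma-a$ and $d=b-\gamma$ are arbitrary, and for small $c,d$ the norms $\|M^{n_j}e^{\alpha_jM}\|$ can be large, so the Neumann-type series you propose for the ``principal factor'' need not converge. The paper's actual mechanism is different in kind: it writes the determinant as $-Mf(-A)$ for an explicit holomorphic symbol $f$, proves by a concrete positivity computation that $f(x)>0$ for every $x\in\RR_+\setminus\{0\}$ (\refL{lem det pas 0 M}, resting on $u_\delta(x),v_\delta(x)>0$ from \cite{thorel}), and then invokes the $\E_\infty$-calculus invertibility criterion \refL{lem inv} (from \cite{LMMT}), whose hypotheses are exactly ``$1-G\in\E_\infty$'' and ``$G$ does not vanish on the positive real axis'', together with the sectoriality of $-A$ of arbitrarily small angle. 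You mention the $\E_\infty$-calculus but never isolate the non-vanishing of the scalar symbol on $(0,+\infty)$ as the condition to verify; without it the invertibility of $\Lambda$ is unproved. Relatedly, the regularity transfer through $\Lambda^{-1}$ is not automatic ``careful tracking'': the paper makes it precise by showing $[\det(\Lambda)]^{-1}=-\tfrac{1}{16k_+k_-}M^{-1}+R_d$ with $R_d$ mapping into $D(M^\infty)$ (via \refL{lemreg}), so that $\psi_1$ gains exactly one power of $M$ over $S_1,S_2$ and lands in $(D(M),X)_{3+\frac1p,p}$. The Dore--Venni theorem, which you invoke for the cofactors, does not in fact appear at this stage of the argument; it enters only through the solvability of $(P_\pm)$ inherited from \cite{thorel}.
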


\begin{Rem}\hfill
\begin{enumerate}

\item In the proof of \refT{Th principal}, we use operator $M$ and interpolation spaces $(D(M),X)_{3-j+\frac{1}{p},p}$, $j=0,1,2,3$. But from the reiteration Theorem, we get
\begin{equation}\label{egalites espaces interpolation}
\hspace{-0.05cm}\left\{\hspace{-0.15cm}\begin{array}{lcllcl}
(D(M),X)_{3+\frac{1}{p},p} &=& (D(A),X)_{1+\frac{1}{2p},p}, & (D(M),X)_{2+\frac{1}{p},p} &=& (D(A),X)_{1+\frac{1}{2}+\frac{1}{2p},p} \\ \ecart
(D(M),X)_{1+\frac{1}{p},p} &=& (D(A),X)_{\frac{1}{2p},p}, & (D(M),X)_{\frac{1}{p},p} &=& (D(A),X)_{\frac{1}{2}+\frac{1}{2p},p}.
\end{array}\right.
\end{equation}

\item We can generalize the previous Theorem by considering a transmission problem between $N$ juxtaposed habitats, with $N \in \NN \setminus \{0\}$. For instance, with $N=3$, it suffices to use \refT{Th principal} on the two first habitats and then apply it on the transmission problem between the second and third habitat to solve the problem. By recurrence, we obtain the result for $N$ habitats.
\end{enumerate}
\end{Rem}
As consequence of \refT{Th principal}, we deduce the following result for problem $(P_{pde})$. Consider the case $A=A_0$ (other cases can be treated).

\begin{Cor}\label{CorAppli}
Assume that $\omega$ is a bounded open set of $\RR^{n-1}$ where $n \geq 2$ with $C^2$-boundary. Let $g_+\in L^p(\Omega_+)$ and $g_-\in L^p(\Omega_-)$ with $p\in(1,+\infty)$ and $\dis p > n$; let $k_+,k_- \in \RR_+\setminus\{0\}$. Then, there exists a unique solution $u$ of $(P_{pde})$, such that 
$$u_-\in W^{4,p}(\Omega_-), \quad u_+\in W^{4,p}(\Omega_+),$$
if and only if 
$$
\varphi_1^\pm, \varphi_2^\pm \in  W^{2,p}(\omega)\cap W^{1,p}_0(\omega),~~\Delta \varphi_1^\pm, \in W^{2-\frac{1}{p},p}(\omega) \cap W_0^{1,p}(\omega)~~\text{and}~~ \Delta\varphi_2^\pm \in W^{1-\frac{1}{p},p}(\omega) \cap W_0^{1,p}(\omega).
$$
\end{Cor}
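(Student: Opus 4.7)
The plan is to specialise Theorem~\ref{Th principal} to the concrete case $X=L^p(\omega)$, $A=A_0$, and then convert the abstract regularity conditions on the data into the Sobolev conditions announced in the corollary, while upgrading the abstract regularity of the solution into $W^{4,p}(\Omega_\pm)$. Consequences~1 of Section~\ref{Consequences} already guarantees that $A_0$ satisfies $(H_1)$, $(H_2)$, $(H_3)$ on $L^p(\omega)$ for every $p\in(1,+\infty)$, so Theorem~\ref{Th principal} applies and delivers a unique classical solution $u$ of (P) if and only if
$$\varphi_1^\pm\in(D(A_0),L^p(\omega))_{1+\frac{1}{2p},p}\quad\text{and}\quad \varphi_2^\pm\in(D(A_0),L^p(\omega))_{1+\frac{1}{2}+\frac{1}{2p},p}.$$

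To identify these spaces I would unfold the definition of Section~\ref{sect interp}: each of them is the set of $\psi\in D(A_0)=W^{2,p}(\omega)\cap W^{1,p}_0(\omega)$ for which $A_0\psi=\Delta\psi$ lies in the corresponding lower-order interpolation space, namely $(D(A_0),L^p(\omega))_{\frac{1}{2p},p}$ for $\varphi_1^\pm$ and $(D(A_0),L^p(\omega))_{\frac{1}{2}+\frac{1}{2p},p}$ for $\varphi_2^\pm$. The classical characterisation of real interpolation for the Dirichlet Laplacian on a bounded $C^2$-domain (see, e.g., \cite{grisvard} or \cite{triebel}) reads
$$(D(A_0),L^p(\omega))_{s,p}\;=\;W^{2-2s,p}(\omega)\cap W^{1,p}_0(\omega),\qquad\text{whenever } 2-2s>1/p.$$
With $s=\frac{1}{2p}$ this yields the order $2-\frac{1}{p}$, and with $s=\frac{1}{2}+\frac{1}{2p}$ it yields $1-\frac{1}{p}$; as $p>n\geqslant 2$ we have $1-\frac{1}{p}>\frac{1}{p}$, so the Dirichlet trace is encoded in both cases, producing precisely the announced conditions
$$\Delta\varphi_1^\pm\in W^{2-\frac{1}{p},p}(\omega)\cap W^{1,p}_0(\omega),\qquad \Delta\varphi_2^\pm\in W^{1-\frac{1}{p},p}(\omega)\cap W^{1,p}_0(\omega).$$

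It then remains to pass from the operational regularity
$$u_\pm\in W^{4,p}(J_\pm;L^p(\omega))\cap L^p(J_\pm;D(A_0^2)),\qquad u''_\pm\in L^p(J_\pm;D(A_0)),$$
where $J_-=(a,\gamma)$ and $J_+=(\gamma,b)$, to the isotropic statement $u_\pm\in W^{4,p}(\Omega_\pm)$. The embedding $D(A_0^2)\subset W^{4,p}(\omega)$ (elliptic regularity for $\Delta^2$) supplies every pure $y$-derivative up to order four, the space $W^{4,p}(J_\pm;L^p(\omega))$ supplies every pure $x$-derivative, and $u''_\pm\in L^p(J_\pm;W^{2,p}(\omega))$ combined with an anisotropic interpolation/Fubini-type argument controls the mixed derivatives $\partial_x^j D_y^\alpha u_\pm$ with $j+|\alpha|\leqslant 4$. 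Finally, the Sobolev embedding $W^{4,p}(\Omega_\pm)\hookrightarrow C^{3}(\overline{\Omega_\pm})$, available precisely because $p>n$, shows that the pointwise boundary and transmission conditions of $(P_{pde})$ are equivalent to their operational counterparts in (P), closing the equivalence.

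The main obstacle is the interpolation identification of the second step, in particular for the exponent $s=\frac{1}{2}+\frac{1}{2p}$ whose corresponding order $1-\frac{1}{p}$ sits near the threshold $1/p$ where the Dirichlet boundary condition is lost; one has to verify that this identification is consistent with the reiteration identities~\eqref{egalites espaces interpolation} and with the Grisvard-type characterisation of $(D(A_0),L^p(\omega))_{s,p}$. Once this is done, the upgrade to isotropic $W^{4,p}(\Omega_\pm)$-regularity in the third step is essentially routine.
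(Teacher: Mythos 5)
Your proposal is correct and takes essentially the same route as the paper, whose entire proof of \refC{CorAppli} is the one-line remark that it is ``quite similar'' to Corollary~3.6 of \cite{LLMT} and Corollary~2.7 of \cite{LMMT}: specialise \refT{Th principal} to $A=A_0$ on $X=L^p(\omega)$ (using the verifications of $(H_1)$--$(H_4)$ in Section~\ref{Consequences}), identify the abstract interpolation conditions via Grisvard's characterisation of $(D(A_0),L^p(\omega))_{s,p}$, and upgrade the operational regularity to isotropic $W^{4,p}(\Omega_\pm)$ together with the embedding $W^{4,p}(\Omega_\pm)\hookrightarrow C^3(\overline{\Omega_\pm})$ for $p>n$ to recover the pointwise boundary and transmission conditions. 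Your sketch in fact supplies more detail than the paper itself does, and correctly locates the only delicate points (the threshold $1-\tfrac1p>\tfrac1p$, hence the need for $p>2$, and the control of the mixed derivatives).
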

\begin{proof}
The proof is quite similar to the one of Corollary~3.6 in \cite{LLMT}, see also Corollary 2.7 in \cite{LMMT}.
\end{proof}
Taking into account the result of \refT{Th principal}, we can also obtain anisotropic results by considering $f_- \in L^p(a,\gamma;L^q(\omega))$ and $f_+ \in L^p(\gamma,b;L^q(\omega))$ with $p,q \in (1,+\infty)$.

\section{Preliminary results}\label{Section Proof prel res}

Throughout this article, we set 
$$c = \gamma-a > 0\quad \text{and} \quad d = b-\gamma > 0.$$ 
From \refR{Rem P+ P-}, to solve problem (P), we first have to study problems $(P_-)$ and $(P_+)$. To this end, we need the following invertibility result obtained in \cite{thorel}. 
\begin{Lem}[\cite{thorel}]\label{lem U+- V+- inv}
The operators $U_+$, $U_-$, $V_+$, $V_- \in \L(X)$ defined by
\begin{equation}\label{U V +-}
\left \{\begin{array}{rclrcl}
U_- & := & \dis I - e^{2c M} + 2c Me^{c M}, & U_+ & := & \dis I - e^{2d M} + 2d Me^{d M}\\ \ecart
V_- & := & \dis I - e^{2c M} - 2c Me^{c M}, & V_+ & := & \dis I - e^{2d M} - 2d Me^{d M},
\end{array}\right.
\end{equation}
are invertible with bounded inverse.
\end{Lem}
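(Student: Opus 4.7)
The approach is to recognize $U_\pm, V_\pm$ as the images of scalar holomorphic symbols under the bounded holomorphic functional calculus of $M$, and reduce invertibility to a scalar non-vanishing check on the spectrum of $M$.

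First, I would collect what the hypotheses give us about $M$. From Consequence 3, $-M \in \text{BIP}(X,\theta_A/2)$ with $\theta_A/2 < \pi/4$; in particular $M$ generates a bounded analytic semigroup, so $e^{cM}$, $cM e^{cM}$, $e^{dM}$, $dM e^{dM}$ are bounded on $X$, showing right away that $U_\pm, V_\pm \in \L(X)$. The combination of $(H_2)$ with the identity $A = -M^2$ forces $0\in\rho(M)$, hence $\sigma(M)\subset -\overline{S_{\theta_A/2}}$ is bounded away from $0$; equivalently $\sigma(cM)$ and $\sigma(dM)$ sit in a cut sector of the form $\{z\in -\overline{S_{\theta_A/2}}:|z|\geqslant \varepsilon\}$ for some $\varepsilon>0$. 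Under the UMD assumption $(H_1)$, the BIP property upgrades to a bounded $H^{\infty}(-S_{\theta})$-calculus for $M$ for each $\theta \in (\theta_A/2,\pi/2)$, which is the tool I plan to use.

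Next I would rewrite the four operators as $U_- = \phi_U(cM)$, $V_- = \phi_V(cM)$, $U_+ = \phi_U(dM)$, $V_+ = \phi_V(dM)$, with
$$\phi_U(z) := 1 - e^{2z} + 2z e^z = 2e^z(z - \sinh z),\qquad \phi_V(z) := 1 - e^{2z} - 2z e^z = -2e^z(z + \sinh z).$$
On any sector $-S_{\theta}$ with $\theta < \pi/2$ one has $|e^{2z}| \leqslant 1$ and $|ze^z| \to 0$ as $|z|\to\infty$, so $\phi_U,\phi_V$ are bounded holomorphic on such a sector; a Taylor expansion at $z=0$ gives $\phi_U(z) = -z^3/3 + O(z^4)$ and $\phi_V(z) = -z - z^3/3 + O(z^4)$... more precisely I will need that each of them has a zero only at $z=0$.

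The heart of the proof is the scalar claim that $\sinh z = z$ and $\sinh z = -z$ each admit only the solution $z = 0$ on $\mathbb{C}$. Writing $z = x+iy$ and separating real and imaginary parts in $\sinh z = \varepsilon z$ ($\varepsilon = \pm 1$) yields the coupled system
$$\sinh x\,\cos y = \varepsilon x,\qquad \cosh x\,\sin y = \varepsilon y.$$
If $y\neq 0$, the second equation forces $\cosh x\cdot(\sin y/y) = \varepsilon$, but $|\sin y/y| < 1$ for $y\neq 0$ and $\cosh x\geqslant 1$, a contradiction (and symmetrically for $\varepsilon =-1$); if $y=0$ the first equation reduces to $\sinh x = \varepsilon x$, whose only solution is $x=0$ because $|\sinh x| > |x|$ for $x\neq 0$. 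Hence $\phi_U$ and $\phi_V$ vanish only at $z=0$, and because $\sigma(cM),\sigma(dM)$ avoid a disk around $0$, the symbols $\phi_U,\phi_V$ are bounded away from zero on the spectra.

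Finally, since $\phi_U,\phi_V\to 1$ at infinity in the sector and $\phi_U,\phi_V$ stay away from $0$ on the cut sector that contains $\sigma(cM)$ (resp.\ $\sigma(dM)$), the reciprocals $1/\phi_U$ and $1/\phi_V$ extend to bounded holomorphic functions on that cut sector. Applying the bounded $H^{\infty}$-calculus (together with the spectral mapping theorem in the form appropriate for sectorial operators bounded away from $0$) yields bounded inverses for $\phi_U(cM), \phi_V(cM), \phi_U(dM), \phi_V(dM)$, which is exactly the claim for $U_\pm, V_\pm$. The main obstacle is the scalar non-vanishing step: the real-axis case is immediate, but the complex analysis needed to rule out hidden zeros along imaginary directions (where $\sinh$ oscillates) requires the coupled-system monotonicity argument above; once that is in hand the functional-calculus framework guarantees both invertibility and the boundedness of the inverse without further computation.
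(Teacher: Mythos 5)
Your overall strategy (read $U_\pm,V_\pm$ as $\phi_U(cM),\phi_U(dM),\phi_V(cM),\phi_V(dM)$ for scalar symbols $\phi_U(z)=2e^z(z-\sinh z)$, $\phi_V(z)=-2e^z(z+\sinh z)$, and reduce invertibility to non-vanishing of the symbol near the spectrum) is sound and is essentially what the cited reference does; the paper itself gives no proof and simply points to \cite{thorel}, Proposition 4.5. But the step you call the heart of the proof is wrong. The equation $\sinh z=z$ does \emph{not} have $z=0$ as its only complex solution: writing $z=x+iy$, the system $\sinh x\cos y=x$, $\cosh x\sin y=y$ has infinitely many nonzero solutions, asymptotically located near $y\approx \pi/2+2k\pi$, $x\approx\log(2|y|)$ (and their reflections), i.e.\ with argument tending to $\pm\pi/2$. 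Your claimed contradiction --- ``$\cosh x\,(\sin y/y)=\varepsilon$ with $|\sin y/y|<1$ and $\cosh x\geqslant 1$ is impossible'' --- is not a contradiction at all: $\cosh x>1$ can exactly compensate $|\sin y/y|<1$ whenever $x\neq 0$ and $\sin y/y>0$. Your argument only closes when $x=0$, i.e.\ it rules out zeros on the imaginary axis, not in the plane.

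What is actually true, and what is actually needed, is non-vanishing of $\phi_U,\phi_V$ on the closed sector $\{z\neq 0:\ |\arg(-z)|\leqslant\pi/4\}$ that contains $\sigma(cM)$ and $\sigma(dM)$ (this is exactly Lemma 4.4 of \cite{thorel}, quoted later in the paper as $u_\delta(z),v_\delta(z)\neq 0$ for $z\in\CC_+\setminus\{0\}$); the spurious zeros of $z\mp\sinh z$ all escape toward the imaginary directions and miss this sector, but proving that requires a genuinely sector-restricted estimate, not the global claim you made. Two ways to repair your write-up: either prove the sectorial non-vanishing directly (e.g.\ exploit $|y|\leqslant|x|$ in the sector to control $\cosh x\sin y=y$ and $\sinh x\cos y=x$ simultaneously), or bypass the complex zeros entirely by using the paper's own machinery: under $(H_4)$, $-A\in\mathrm{Sect}(0)$, so \refL{lem inv} (the $\E_\infty$-calculus lemma) only requires the symbols $u_\delta,v_\delta$, viewed as functions of $-A$, to be nonzero on $\RR_+\setminus\{0\}$ --- which is the easy real-axis inequality $|\sinh t|>|t|$ for $t\neq 0$ that you already have. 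As a minor point, your expansion $\phi_V(z)=-z-z^3/3+O(z^4)$ is also off (it should be $-4z+O(z^2)$), though nothing hinges on it.
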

All these exponentials are analytic semigroups and they are well defined due to statement $3$ of Section \ref{Consequences}. For a detailed proof, see~\cite{thorel}, Proposition 4.5. 

\subsection{Problem $\mathbf{(P_-)}$}

\begin{Prop}\label{prop sol à gauche M}
Let $f_- \in L^p(a,\gamma; X)$. Assume that $(H_1)$, $(H_2)$ and $(H_3)$ hold. Then there exists a unique classical solution $u_-$ of problem $(P_-)$ if and only if 
\begin{equation} \label{reg phi/psi i P- M}
\varphi_1^-,\psi_1 \in (D(A),X)_{1+\frac{1}{2p},p} \quad  \text{and} \quad \varphi_2^-,\psi_2 \in (D(A),X)_{1+\frac{1}{2}+\frac{1}{2p},p}.
\end{equation}
Moreover 
\begin{equation}\label{rep u- M}
\begin{array}{rcl}
u_-(x) &\hspace*{-0.1cm} = &\hspace*{-0.1cm} \dis ~~~\left(e^{(x-a) M} - e^{(\gamma-x) M}\right)\alpha_1^- + \left((x-a)e^{(x-a) M} - (\gamma-x)e^{(\gamma-x)M} \right) \alpha_2^- \\ \ecart
&\hspace*{-0.1cm}&\hspace*{-0.1cm} \dis + \left(e^{(x-a) M} + e^{(\gamma-x) M}\right)\alpha_3^- + \left((x-a)e^{(x-a)M} + (\gamma-x)e^{(\gamma-x)M}\right) \alpha_4^- + F_-(x),
\end{array}
\end{equation}
where
\begin{equation}\label{ali- M}
\left \{ \begin{array}{rcr}
\alpha_1^- & := & \dis -\frac{1}{2} U_-^{-1} \left(\left(I + \left( I + c M \right) e^{cM} \right)\psi_1 - ce^{cM} \psi_2\right) + \tilde{\varphi}_1^- \\ \ecart

\alpha_2^- & := & \dis \frac{1}{2} U_-^{-1} \left( \left(I + e^{cM}\right) M \psi_1 + \left(I - e^{cM}\right) \psi_2 \right) + \tilde{\varphi}_2^- \\ \ecart

\alpha_3^- & := & \dis \frac{1}{2} V_-^{-1} \left(\left(I - \left( I + c M \right) e^{cM} \right) \psi_1 + c e^{cM} \psi_2\right) + \tilde{\varphi}_3^- \\ \ecart

\alpha_4^- & := & \dis - \frac{1}{2} V_-^{-1} \left( \left(I - e^{cM}\right) M \psi_1 + \left(I + e^{cM}\right) \psi_2 \right) + \tilde{\varphi}_4^-,
\end{array}\right.
\end{equation}
with
\begin{equation}\label{phi i tilde - M}
\left\{\begin{array}{rcl}
\tilde{\varphi_1}^- & := & \dis \frac{1}{2} U_-^{-1} \left(\varphi_1^- + e^{cM} \left(\varphi_1^- + c \left( M \varphi_1^- + \varphi_2^- - F_-'(a) - F_-'(\gamma)\right) \right)\right) \\ \\

\tilde{\varphi_2}^- & := & \dis -\frac{1}{2} U_-^{-1} \left(M \varphi_1^- - \varphi_2^- + F_-'(a) + F_-'(\gamma)\right) \\ \ecart
&& \dis -\frac{1}{2} U_-^{-1} e^{cM} \left( M \varphi_1^- + \varphi_2^- - F_-'(a) - F_-'(\gamma) \right)  \\ \\

\tilde{\varphi_3}^- & := & \dis \frac{1}{2} V_-^{-1} \left(\varphi_1^- - e^{cM} \left(\varphi_1^- + c \left( M \varphi_1^- + \varphi_2^- - F_-'(a) + F_-'(\gamma)\right) \right)\right) \\ \\
 
\tilde{\varphi_4}^- & := & \dis -\frac{1}{2} V_-^{-1} \left(M \varphi_1^- - \varphi_2^- + F_-'(a) - F_-'(\gamma)\right) \\ \ecart
 && \dis + \frac{1}{2} V_-^{-1} e^{cM} \left( M \varphi_1^- + \varphi_2^- - F_-'(a) + F_-'(\gamma) \right),
\end{array}\right.
\end{equation} 
and $F_-$ is the unique classical solution of problem
\begin{equation}\label{pb F- M} \left\{\begin{array}{l}
u_-^{(4)}(x) + 2A u''_- (x) + A^2 u_-(x) = f_-(x), \quad \text{a.e. } x \in (a, \gamma) \\ \ecart
u_-(a) = u_-(\gamma) = 0 \\ \ecart
u''_-(a) = u''_-(\gamma) = 0.
\end{array}\right.
\end{equation}
\end{Prop}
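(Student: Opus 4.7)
The plan is to split $u_- = v_- + F_-$, where $F_-$ is the unique classical solution of~\eqref{pb F- M}, whose existence under $(H_1)$--$(H_3)$ follows from the construction in~\cite{thorel}. The remainder $v_-$ then satisfies the homogeneous equation $v_-^{(4)} + 2A v_-'' + A^2 v_- = 0$ on $(a,\gamma)$ with adjusted boundary data
\[
v_-(a) = \varphi_1^-, \quad v_-'(a) = \varphi_2^- - F_-'(a), \quad v_-(\gamma) = \psi_1, \quad v_-'(\gamma) = \psi_2 - F_-'(\gamma).
\]

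Exploiting the factorization $\partial_x^4 + 2A\partial_x^2 + A^2 = (\partial_x^2 - M^2)^2$, with $M = -\sqrt{-A}$ generating a bounded analytic semigroup (consequence~3 of~\refss{Consequences}), I would parametrize the general solution on the finite interval $[a,\gamma]$ by four vectors $\alpha_1^-,\ldots,\alpha_4^- \in X$ via the symmetric/antisymmetric basis $e^{(x-a)M} \mp e^{(\gamma-x)M}$ and $(x-a)e^{(x-a)M} \mp (\gamma-x)e^{(\gamma-x)M}$, leading to the ansatz~\eqref{rep u- M}. Imposing the four boundary conditions yields an operator-valued $4 \times 4$ linear system whose symmetric/antisymmetric structure decouples it into two independent $2 \times 2$ blocks, whose determinant operators are precisely $U_-$ (for $(\alpha_1^-,\alpha_2^-)$) and $V_-$ (for $(\alpha_3^-,\alpha_4^-)$). \refL{lem U+- V+- inv} then supplies bounded inverses $U_-^{-1}$ and $V_-^{-1}$, and resolving the two blocks produces the explicit formulas~\eqref{ali- M} and~\eqref{phi i tilde - M}.

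For the regularity characterization~\eqref{reg phi/psi i P- M}, necessity follows by observing that a classical solution $u_-$ satisfies $Au_- \in W^{2,p}(a,\gamma;X) \cap L^p(a,\gamma;D(A))$ (from $u_- \in L^p(a,\gamma;D(A^2))$ and $u''_- \in L^p(a,\gamma;D(A))$), and then applying \refL{remtrace} with $n=2$, $k=1$, $j \in \{0,1\}$ to $Au_-$; this gives $Au_-(a),\,Au_-(\gamma) \in (D(A),X)_{\frac{1}{2p},p}$ and $Au'_-(a),\,Au'_-(\gamma) \in (D(A),X)_{\frac{1}{2}+\frac{1}{2p},p}$, which is exactly~\eqref{reg phi/psi i P- M} by definition of the higher-index interpolation spaces. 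For sufficiency, I verify that~\eqref{rep u- M} produces a function of the required class whenever~\eqref{reg phi/psi i P- M} holds, using the trace/extension principle: for $\alpha \in (D(M),X)_{k+\frac{1}{p},p}$ the map $x \mapsto e^{(x-a)M}\alpha$ lies in $W^{k,p}(a,\gamma;X) \cap L^p(a,\gamma;D(M^k))$, with an analogous statement for the weighted terms $(x-a)e^{(x-a)M}\alpha$, a fact that rests on $(H_3)$ via Dore--Venni maximal regularity. Translating~\eqref{reg phi/psi i P- M} through~\eqref{egalites espaces interpolation} and reading off~\eqref{ali- M}--\eqref{phi i tilde - M} then shows $\alpha_1^-,\alpha_3^- \in (D(M),X)_{3+\frac{1}{p},p}$ and $\alpha_2^-,\alpha_4^- \in (D(M),X)_{2+\frac{1}{p},p}$, whence $u_-$ has the desired regularity. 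The principal obstacle is this last step: one has to verify that the operators $U_-^{-1}, V_-^{-1}, e^{cM}$ and $cMe^{cM}$ occurring in~\eqref{ali- M}--\eqref{phi i tilde - M} preserve each space $(D(M),X)_{k+\frac{1}{p},p}$, which under $(H_2)$ and $(H_3)$ follows from the commutation of bounded analytic semigroups with real interpolation spaces and the explicit structure of $U_-$, $V_-$ as perturbations of the identity by compositions of $Me^{cM}$ and $e^{2cM}$.
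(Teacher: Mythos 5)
Your proposal is correct in outline, but it takes a different route from the paper: the paper's entire proof is a citation --- existence, uniqueness and the equivalence with \eqref{reg phi/psi i P- M} are quoted from Theorem~2.8 of \cite{thorel}, and the representation \eqref{rep u- M}--\eqref{phi i tilde - M} is obtained by renaming the variables in formula~(31) of \cite{thorel} --- whereas you reconstruct the derivation from scratch. Your reconstruction (split off the particular solution $F_-$ of \eqref{pb F- M}, factor the equation as $(\partial_x^2-M^2)^2$, parametrize the homogeneous solution in the symmetric/antisymmetric basis, observe that the $4\times4$ boundary system decouples into two $2\times2$ blocks with determinant operators $U_-$ and $V_-$ inverted via \refL{lem U+- V+- inv}, and characterize the admissible traces through \refL{remtrace} and the reiteration identities \eqref{egalites espaces interpolation}) is essentially the content of the cited reference, and the regularity bookkeeping you describe is sound. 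What the paper's route buys is brevity and the avoidance of two substantive points that your sketch defers or glosses over: (i) the solvability of the auxiliary problem \eqref{pb F- M} defining $F_-$, which is where the UMD/BIP hypotheses and the Dore--Venni operator-sum machinery actually enter, and (ii) the completeness of the four-parameter ansatz, i.e.\ the fact that \emph{every} classical solution of the homogeneous equation is of the form \eqref{rep u- M}, which is needed for uniqueness and is not automatic for abstract fourth-order equations. Both are established in \cite{thorel}; if you intend your argument to stand alone, these are the two places where real work remains.
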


\begin{proof} 
Due to \cite{thorel}, Theorem 2.8, statement 2., there exists a unique classical solution $u_-$ of problem $(P_-)$ if and only if \eqref{reg phi/psi i P- M} holds. 

Moreover, adapting the representation formula of $u$ given by $(31)$ in \cite{thorel}, where $u$, $f$, $b$, $F_{0,f}$, $\varphi_1$, $\varphi_2$, $\varphi_3$ and $\varphi_4$ are respectively replaced by $u_-$, $f_-$, $\gamma$, $F_-$, $\varphi_1^-$, $\psi_1$, $\varphi_2^-$ and $\psi_2$, we obtain that the representation formula of $u_-$ is given by \eqref{rep u- M}, \eqref{ali- M} and \eqref{phi i tilde - M}. 
\end{proof}

\begin{Rem}\label{Rem al- M}
In the previous proposition, since \eqref{reg phi/psi i P- M}, \eqref{ali- M} and \eqref{phi i tilde - M} hold, we have 
$$\alpha^-_1,\alpha^-_3 \in D(M^3) \quad \text{and} \quad \alpha^-_2,\alpha^-_4 \in D(M^2).$$
Moreover, since $F_-$ is a classical solution of \eqref{pb F- M}, due to \refL{remtrace}, for $j = 0,1,2,3$ and $s=a$ or $\gamma$
$$F_-^{(j)}(s) \in (D(M),X)_{3-j + \frac{1}{p},p}.$$
\end{Rem}

\subsection{Problem $\mathbf{(P_+)}$}

\begin{Prop}\label{prop sol à droite M}
Let $f_+ \in L^p(\gamma,b; X)$. Assume that $(H_1)$, $(H_2)$ and $(H_3)$ hold. Then, there exists a unique classical solution $u_+$ of problem $(P_+)$ if and only if 
\begin{equation}\label{reg phi/psi i P+ M}
\varphi_1^+,\psi_1 \in (D(A),X)_{1+\frac{1}{2p},p} \quad  \text{and} \quad \varphi_2^+,\psi_2 \in (D(A),X)_{1+\frac{1}{2}+\frac{1}{2p},p}.
\end{equation}
Moreover
\begin{equation}\label{rep u+ M}
\begin{array}{rcl}
u_+(x) & = & \dis ~~~\left(e^{(x-\gamma) M} - e^{(b-x) M}\right)\alpha_1^+ + \left((x-\gamma)e^{(x-\gamma) M} - (b-x)e^{(b-x)M} \right) \alpha_2^+ \\ \ecart
&& \dis + \left(e^{(x-\gamma) M} + e^{(b-x) M}\right)\alpha_3^+ + \left((x-\gamma)e^{(x-\gamma)M} + (b-x)e^{(b-x)M}\right) \alpha_4^+ \\ \ecart
&& \dis + F_+(x),
\end{array}
\end{equation}
where
\begin{equation}\label{ali+ M}
\left \{ \begin{array}{rcr}
\alpha_1^+ & := & \dis  \frac{1}{2} U_+^{-1} \left(\left(I + \left( I + d M \right) e^{dM} \right) \psi_1 + d e^{dM} \psi_2 \right) + \tilde{\varphi}_1^+\\ \\

\alpha_2^+ & := & \dis -\frac{1}{2} U_+^{-1} \left( \left(I + e^{dM}\right) M \psi_1 - \left(I - e^{dM}\right) \psi_2 \right) + \tilde{\varphi}_2^+ \\ \\

\alpha_3^+ & := & \dis \frac{1}{2} V_+^{-1} \left(\left(I - \left( I + d M \right) e^{dM} \right) \psi_1 - d e^{dM} \psi_2 \right) + \tilde{\varphi}_3^+ \\ \\

\alpha_4^+ & := & \dis - \frac{1}{2} V_+^{-1} \left( \left(I - e^{dM}\right) M \psi_1 - \left(I + e^{dM}\right) \psi_2 \right) + \tilde{\varphi}_4^+,
\end{array}\right.
\end{equation}
with
\begin{equation}\label{phi i tilde + M}
\left\{\begin{array}{rcl}
\tilde{\varphi_1}^+ & := & \dis - \frac{1}{2} U_+^{-1} \left(\varphi_1^+ + e^{dM} \left(\varphi_1^+ + d \left(M \varphi_1^+ - \varphi_2^+ + F_+'(\gamma) + F_+'(b) \right) \right) \right) \\ \\

\tilde{\varphi_2}^+ & := & \dis \frac{1}{2} U_+^{-1} \left( M \varphi_1^+ + \varphi_2^+ - F_+'(\gamma) - F_+'(b) \right) \\ \ecart
&& \dis + \frac{1}{2} U_+^{-1} e^{dM} \left( M\varphi_1^+ - \varphi_2^+ + F_+'(\gamma) + F_+'(b) \right) \\ \\

\tilde{\varphi_3}^+ & := & \dis \frac{1}{2} V_+^{-1} \left(\varphi_1^+ - e^{dM} \left( \varphi_1^+ + d  \left(M \varphi_1^+ - \varphi_2^+ - F_+'(\gamma) + F_+'(b)\right) \right) \right) \\ \\

\tilde{\varphi_4}^+ & := & \dis - \frac{1}{2} V_+^{-1} \left( M \varphi_1^+ + \varphi_2^+ + F_+'(\gamma) - F_+'(b) \right) \\ \ecart
&& \dis + \frac{1}{2} V_+^{-1} e^{dM} \left( M \varphi_1^+ - \varphi_2^+ - F_+'(\gamma) + F_+'(b) \right),
\end{array}\right.
\end{equation} 
and $F_+$ is the unique classical solution of problem
\begin{equation}\label{pb F+ M}
\left\{\begin{array}{l}
u_+^{(4)}(x) + 2A u''_+ (x) + A^2 u_+(x) = f_+(x), \quad \text{a.e. } x \in (\gamma, b) \\ \ecart
u_+(\gamma) = u_+(b) = u''_+(\gamma) = u''_+(b) = 0.
\end{array}\right.
\end{equation}
\end{Prop}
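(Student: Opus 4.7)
The plan is to mirror the proof of \refP{prop sol à gauche M} by reducing $(P_+)$ to the setting of Theorem 2.8, statement 2 of \cite{thorel}, this time applied on the right-hand interval $(\gamma, b)$. First I would perform the translation $\tilde{x} := x - \gamma$ mapping $(\gamma, b)$ onto $(0, d)$ with $d = b-\gamma$, and set $\tilde{u}(\tilde{x}) := u_+(\tilde{x}+\gamma)$ and $\tilde{f}(\tilde{x}) := f_+(\tilde{x}+\gamma)$. Since translation preserves the fourth order operator $\partial^4 + 2A\partial^2 + A^2$, $(P_+)$ is equivalent to the biharmonic problem on $(0,d)$ with Dirichlet-type data $\tilde{u}(0) = \psi_1$, $\tilde{u}(d) = \varphi_1^+$, $\tilde{u}'(0) = \psi_2$, $\tilde{u}'(d) = \varphi_2^+$. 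This is exactly the framework of \cite{thorel}, and Theorem 2.8, statement 2 there, combined with \refL{remtrace} applied at both endpoints, gives existence and uniqueness of a classical solution together with the characterization \eqref{reg phi/psi i P+ M} of the boundary data.

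For the representation formula, I would specialise formula $(31)$ of \cite{thorel} on the interval $(0,d)$ and then translate back to the variable $x \in (\gamma, b)$, so that the distances from the endpoints read $x - \gamma$ and $b - x$. The matching with \cite{thorel} is the mirror image of the one used for $(P_-)$: the data at the left endpoint $\gamma$ are now $(\psi_1, \psi_2)$ while the data at the right endpoint $b$ are $(\varphi_1^+, \varphi_2^+)$. This yields the decomposition of $u_+$ into the four semigroup contributions $e^{(x-\gamma)M}$, $e^{(b-x)M}$, $(x-\gamma)e^{(x-\gamma)M}$ and $(b-x)e^{(b-x)M}$, and determines the coefficients $\alpha_i^+$ via the invertibility of $U_+$ and $V_+$ granted by \refL{lem U+- V+- inv}. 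The inhomogeneous part is absorbed into $F_+$, the classical solution of \eqref{pb F+ M} provided by the same theorem of \cite{thorel} with zero boundary data; this is what produces the trace terms $F_+'(\gamma)$ and $F_+'(b)$ appearing in \eqref{phi i tilde + M}.

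The main obstacle will be sign bookkeeping. The formulas \eqref{phi i tilde + M} differ from \eqref{phi i tilde - M} because of two effects: the outward normal at $b$ points opposite to the outward normal at $a$, which flips the sign of $\varphi_2^+$ in several combinations relative to $\varphi_2^-$; and the roles of ``fixed boundary data'' versus ``interface data'' are exchanged between $\varphi_i$ and $\psi_i$, so the linear combinations in $\psi_1, \psi_2$ must be tracked carefully through the substitution. Once \eqref{ali+ M}--\eqref{phi i tilde + M} are established, verification of the four boundary conditions of $(P_+)$ reduces to direct substitution using $U_+^{-1}$ and $V_+^{-1}$ to close the resulting linear systems. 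The regularity $u_+ \in W^{4,p}(\gamma,b;X) \cap L^p(\gamma,b;D(A^2))$ with $u_+'' \in L^p(\gamma,b;D(A))$ then follows from the regularity of $F_+$ together with the maximal $L^p$-regularity associated with the analytic semigroups generated by $M$ and $-M$, already exploited in \cite{thorel}.
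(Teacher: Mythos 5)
Your proposal is correct and follows essentially the same route as the paper: both deduce existence, uniqueness and the characterization \eqref{reg phi/psi i P+ M} from Theorem 2.8, statement 2 of \cite{thorel}, and both obtain \eqref{rep u+ M}--\eqref{phi i tilde + M} by specialising the representation formula $(31)$ of \cite{thorel} with the left endpoint playing the role of the interface $\gamma$ (data $\psi_1,\psi_2$) and the right endpoint $b$ carrying $(\varphi_1^+,\varphi_2^+)$. The preliminary translation to $(0,d)$ and the remarks on sign bookkeeping are harmless elaborations of the same substitution the paper performs directly.
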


\begin{proof}
Due to \cite{thorel}, Theorem 2.8, statement 2., there exists a unique classical solution $u_+$ of problem $(P_+)$ if and only if \eqref{reg phi/psi i P+ M} holds. Moreover, adapting the representation formula of $u$ given by $(31)$ in \cite{thorel}, where $u$, $f$, $a$, $F_{0,f}$, $\varphi_1$, $\varphi_2$, $\varphi_3$ and $\varphi_4$ are respectively replaced by $u_+$, $f_+$, $\gamma$, $F_+$, $\psi_1$, $\varphi_1^+$, $\psi_2$ and $\varphi_2^+$, we obtain that the representation formula of $u_+$ is given by \eqref{rep u+ M}, \eqref{ali+ M} and \eqref{phi i tilde + M}. 
\end{proof}

\begin{Rem}\label{Rem al+ M}
In the previous proposition, since \eqref{reg phi/psi i P+ M}, \eqref{ali+ M} and \eqref{phi i tilde + M} hold, we have 
$$\alpha^+_1,\alpha^+_3 \in D(M^3) \quad \text{and} \quad \alpha^+_2,\alpha^+_4 \in D(M^2).$$
Moreover, since $F_+$ is a classical solution of \eqref{pb F+ M}, due to \refL{remtrace}, for $j = 0,1,2,3$ and $s=\gamma$ or $b$
$$F_+^{(j)}(s) \in (D(M),X)_{3-j + \frac{1}{p},p}.$$
\end{Rem}

\subsection{The transmission system}

In this section we give the proof of \refT{Th syst trans M} stated below. This theorem ensures the equivalence between the resolution of problem $(P)$ and the resolution of the following system
\begin{equation}\label{syst trans M}
\left\{\begin{array}{lcl}
\left(P_1^+ + P_1^-\right) M \psi_1 - \left(P_2^+ - P_2^-\right) \psi_2 & = & S_1 \\ \ecart

\left(P_2^+ - P_2^-\right) M \psi_1 - \left(P_3^+ + P_3^-\right) \psi_2 & = & S_2.
\end{array}\right.
\end{equation} 
The coefficients of the previous system are given by 
\begin{equation}\label{P1+ P2+ P3+ M}
\left\{\begin{array}{lcl}
P_1^+ & = & \dis k_+ \left(U_+^{-1} \left(I + e^{dM}\right)^2 + V_+^{-1} \left(I - e^{dM}\right)^2\right) \\ \ecart

P_2^+ & = & \dis k_+ \left(U_+^{-1} + V_+^{-1}\right) \left(I - e^{2dM}\right) \\ \ecart

P_3^+ & = & \dis k_+ \left(U_+^{-1}\left(I - e^{dM}\right)^2 + V_+^{-1} \left(I + e^{dM}\right)^2 \right)
\end{array}\right.
\end{equation} 
and
\begin{equation}\label{P1- P2- P3- M}
\left\{\begin{array}{lcl}
P_1^- & = & k_- \left(U_-^{-1} \left(I + e^{cM}\right)^2 + V_-^{-1} \left(I - e^{cM}\right)^2\right) \\ \ecart

P_2^- & = & k_- \left(U_-^{-1} + V_-^{-1} \right)  \left(I - e^{2cM}\right) \\ \ecart

P_3^- & = & k_- \left(U_-^{-1}\left(I - e^{cM}\right)^2 + V_-^{-1} \left(I + e^{cM}\right)^2 \right).
\end{array}\right.
\end{equation}
The seconds members are given by  
\begin{equation}\label{S1 M}
S_1 = 2 k_+ \left(\left(\tilde{\varphi}_2^+ + \tilde{\varphi}_4^+ \right) + e^{dM} \left( \tilde{\varphi}_2^+ - \tilde{\varphi}_4^+\right)\right) - 2 k_- \left(\left(\tilde{\varphi}_2^- - \tilde{\varphi}_4^-\right)  + e^{c M} \left(\tilde{\varphi}_2^- + \tilde{\varphi}_4^-\right)\right) - M^{-2}\check{S}
\end{equation}
where
\begin{equation}\label{R M}
\check{S} = -k_+ F^{(3)}_+(\gamma) + k_+ M^2 F'_+(\gamma) + k_- F^{(3)}_-(\gamma) - k_- M^2 F'_-(\gamma)
\end{equation}
and
\begin{equation}\label{S2 M}
S_2 = 2 k_+ \left(\left(\tilde{\varphi}_2^+ + \tilde{\varphi}_4^+ \right) - e^{dM} \left( \tilde{\varphi}_2^+ - \tilde{\varphi}_4^+\right)\right) + 2 k_- \left(\left(\tilde{\varphi}_2^- - \tilde{\varphi}_4^-\right) - e^{c M} \left(\tilde{\varphi}_2^- + \tilde{\varphi}_4^-\right)\right).
\end{equation}
\begin{Th}\label{Th syst trans M}
Let $f_- \in L^p(a,\gamma;X)$ and $f_+ \in L^p(\gamma,b;X)$. Assume that $(H_1)$, $(H_2)$ and $(H_3)$ hold. Then, problem (P) has a unique classical solution if and only if the data $\varphi_1^+$, $\varphi_1^-$, $\varphi^+_2$, $\varphi^-_2$ satisfy \eqref{reg phi +-} and system \eqref{syst trans M} has a unique solution $(\psi_1,\psi_2)$ such that 
\begin{equation}\label{reg (psi1,psi2) M}
(\psi_1,\psi_2) \in (D(A),X)_{1+\frac{1}{2p},p} \times (D(A),X)_{1+\frac{1}{2}+\frac{1}{2p},p}.
\end{equation}
\end{Th}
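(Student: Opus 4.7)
The strategy is to reduce (P), via Remark \ref{Rem P+ P-} and Propositions \ref{prop sol à gauche M}--\ref{prop sol à droite M}, to an equivalent statement on a pair $(\psi_1,\psi_2)$ satisfying (TC2), and then to show by direct substitution of the representation formulas that (TC2) is exactly system \eqref{syst trans M}.

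First, by Remark \ref{Rem P+ P-}, $u$ is a classical solution of (P) if and only if there exists $(\psi_1,\psi_2) \in X \times X$ such that the associated $u_\pm$ are classical solutions of $(P_\pm)$ and satisfy (TC2). Propositions \ref{prop sol à gauche M} and \ref{prop sol à droite M} then assert that $(P_\pm)$ admits a (unique) classical solution exactly when $\varphi_1^\pm,\psi_1 \in (D(A),X)_{1+\frac{1}{2p},p}$ and $\varphi_2^\pm,\psi_2 \in (D(A),X)_{1+\frac{1}{2}+\frac{1}{2p},p}$, i.e. exactly when \eqref{reg phi +-} holds for the boundary data and \eqref{reg (psi1,psi2) M} holds for $(\psi_1,\psi_2)$. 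Under these regularities $u_\pm$ are given explicitly by \eqref{rep u- M}--\eqref{phi i tilde + M}, so uniqueness of $u$ is equivalent to uniqueness of $(\psi_1,\psi_2)$.

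The remaining task is to show that (TC2) is equivalent to \eqref{syst trans M}. I will differentiate \eqref{rep u- M} and \eqref{rep u+ M} twice and three times in $x$, evaluate at $x=\gamma$, and use $A = -M^2$ to express $k_\pm(u''_\pm(\gamma) + A u_\pm(\gamma))$ and $k_\pm(u^{(3)}_\pm(\gamma) + A u'_\pm(\gamma))$; Remarks \ref{Rem al- M} and \ref{Rem al+ M} ensure every term lies in $X$. After forming the two (TC2) equations and inserting \eqref{ali- M}--\eqref{ali+ M} for $\alpha_j^\pm$, the blocks linear in $\psi_1,\psi_2$ reassemble, through the identities $(I \pm e^{cM})^2$ and $I - e^{2cM} = (I - e^{cM})(I + e^{cM})$ (and their analogues with $d$ in place of $c$), into the operators $P_1^\pm, P_2^\pm, P_3^\pm$ of \eqref{P1+ P2+ P3+ M}--\eqref{P1- P2- P3- M}. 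The sign pattern $(P_1^+ + P_1^-)$ versus $(P_2^+ - P_2^-)$ in \eqref{syst trans M} reflects that differentiation at $\gamma$ acts via $+M$ on $e^{(x-\gamma)M}$ and via $-M$ on $e^{(\gamma-x)M}$, and symmetrically on the $+$-side. The residual contributions, built from $\tilde\varphi_j^\pm$ and from $F_\pm'(\gamma), F_\pm^{(3)}(\gamma)$, assemble into $S_1, S_2$ as in \eqref{S1 M}--\eqref{S2 M}, the term $M^{-2}\check S$ in $S_1$ coming from the appearance of $A = -M^2$ in (TC2).

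The main obstacle is the algebraic bookkeeping in this last step: each $u_\pm''(\gamma)$ and $u_\pm^{(3)}(\gamma)$ is a sum of eight exponential contributions plus a derivative of $F_\pm$, so substituting the sixteen formulas \eqref{ali- M}--\eqref{phi i tilde + M} produces a very long intermediate expression that must be carefully regrouped, using the identities above and the fact that $M$ commutes with $e^{xM}, U_\pm^{-1}, V_\pm^{-1}$, into the compact form \eqref{syst trans M}. Once this reduction is carried out, combining the three steps yields the claimed equivalence: (P) admits a unique classical solution if and only if \eqref{reg phi +-} holds and \eqref{syst trans M} has a unique solution in the space prescribed by \eqref{reg (psi1,psi2) M}.
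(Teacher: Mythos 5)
Your proposal is correct and follows essentially the same route as the paper: reduce (P) to the pair $(\psi_1,\psi_2)=(u_\pm(\gamma),u'_\pm(\gamma))$ via Remark~\ref{Rem P+ P-} and Propositions~\ref{prop sol à gauche M}--\ref{prop sol à droite M}, rewrite $(TC2)$ with $A=-M^2$, differentiate the representation formulas \eqref{rep u- M} and \eqref{rep u+ M}, evaluate at $\gamma$, and regroup the $\alpha_j^\pm$ into the operators $P_i^\pm$ and the source terms $S_1,S_2$, with the converse obtained by gluing the unique solutions of $(P_-)$ and $(P_+)$. The only step you leave implicit is the explicit intermediate computation (the paper's equations \eqref{u'''-u' -}--\eqref{u''-u +} and \eqref{syst trans 2 M}), which is exactly the bookkeeping you describe.
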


\begin{proof}
Assume that problem $(P)$ has a unique classical solution $u$. We set
$$\psi_1 = u_-(\gamma) = u_+(\gamma) \quad \text{and} \quad \psi_2 = u'_-(\gamma) = u'_+(\gamma).$$
We get that $u_-$ (respectively $u_+$) is the classical solution of $(P_-)$ (respectively $(P_+)$). Then, applying \refP{prop sol à gauche M} (respectively \refP{prop sol à droite M}), we obtain \eqref{reg phi +-}. Moreover, from \eqref{egalites espaces interpolation}, we have
$$\psi_1 \in (D(M),X)_{3+\frac{1}{p},p} \quad  \text{and} \quad \psi_2 \in (D(M),X)_{2 + \frac{1}{p},p}.$$
It remains to prove that $(\psi_1,\psi_2)$ is solution of system \eqref{syst trans M}. To this end, since $u$ satisfies the transmission conditions $(TC2)$. Then, we obtain the following system
\begin{equation*}
\left\{\begin{array}{lllllll}
k_- u_-^{(3)}(\gamma) & + & k_- Au'_-(\gamma) & = & k_+ u_+^{(3)}(\gamma) & + & k_+ Au'_+(\gamma) \\ \ecart

k_- u''_-(\gamma) & + & k_- Au_-(\gamma) & = & k_+ u''_+(\gamma) & + & k_+ Au_+(\gamma).
\end{array}\right.
\end{equation*} 
Hence
$$\left\{\begin{array}{lllll}
k_+ \left(u_+^{(3)}(\gamma) - M^2 u'_+(\gamma)\right) & - & k_- \left(u_-^{(3)}(\gamma) - M^2 u'_-(\gamma)\right) & = & 0 \\ \ecart

k_+ \left(u''_+(\gamma) - M^2 u_+(\gamma) \right) & - & k_- \left(u''_-(\gamma) - M^2 u_-(\gamma)\right) & = & 0. 
\end{array}\right.$$
Moreover, for all $x \in (a,\gamma)$, we have
$$\begin{array}{rcl}
u_-(x) & = & \dis \left(e^{(x-a) M} - e^{(\gamma-x) M}\right)\alpha_1^- + \left((x-a)e^{(x-a) M} - (\gamma-x)e^{(\gamma-x)M} \right) \alpha_2^- \\ \ecart
&& \dis + \left(e^{(x-a) M} + e^{(\gamma-x) M}\right)\alpha_3^- + \left((x-a)e^{(x-a)M} + (\gamma-x)e^{(\gamma-x)M}\right) \alpha_4^- \\ \ecart
&& + F_-(x) \\ \\

u'_-(x) & = & \dis M \left(e^{(x-a) M} + e^{(\gamma-x) M}\right)\alpha_1^- + M \left(e^{(x-a) M} - e^{(\gamma-x) M}\right)\alpha_3^-  \\ \ecart
&& \dis + \left((I + (x-a)M)e^{(x-a) M} + (I + (\gamma-x)M) e^{(\gamma-x)M} \right) \alpha_2^- \\ \ecart
&& \dis  + \left((I + (x-a)M) e^{(x-a)M} - (I + (\gamma-x)M) e^{(\gamma-x)M}\right) \alpha_4^- + F'_-(x )\\ \\

u''_-(x) & = & \dis M^2 \left(e^{(x-a) M} - e^{(\gamma-x) M}\right)\alpha_1^- + M^2 \left(e^{(x-a) M} + e^{(\gamma-x) M}\right)\alpha_3^- + F''_-(x) \\ \ecart
&& \dis + \left((2M + (x-a)M^2) e^{(x-a) M} - (2M + (\gamma-x)M^2) e^{(\gamma-x)M} \right) \alpha_2^- \\ \ecart
&& \dis + \left((2M + (x-a)M^2)e^{(x-a)M} + (2M + (\gamma-x)M^2)e^{(\gamma-x)M}\right) \alpha_4^-, \\ \\ 

u^{(3)}_-(x) & = & \dis M^3 \left(e^{(x-a) M} + e^{(\gamma-x) M}\right)\alpha_1^- + M^3 \left(e^{(x-a) M} - e^{(\gamma-x) M}\right)\alpha_3^- + F^{(3)}_-(x) \\ \ecart
&& \dis + \left((3M^2 + (x-a)M^3)e^{(x-a) M} + (3M^2 + (\gamma-x)M^3) e^{(\gamma-x)M} \right) \alpha_2^- \\ \ecart
&& \dis + \left((3M^2 + (x-a)M^3) e^{(x-a)M} - (3M^2 + (\gamma-x)M^3) e^{(\gamma-x)M}\right) \alpha_4^-.
\end{array}$$
Thus, we obtain
\begin{equation}\label{u'''-u' -}
\begin{array}{lllll}
\dis k_-\left(u^{(3)}_-(\gamma) - M^2 u'_-(\gamma)\right) &=& \dis k_- \left(2M^2\left(I + e^{c M}\right) \alpha_2^- - 2 M^2 \left(I - e^{c M}\right) \alpha_4^-\right) \\ \ecart
&& \dis + k_- F^{(3)}_-(\gamma) - k_- M^2 F'_-(\gamma).
\end{array}
\end{equation}
Moreover, from \eqref{pb F- M}, we have
\begin{equation}\label{u''-u -}
k_-\left(u''_-(\gamma) - M^2 u_-(\gamma)\right) = -k_-\left(2M\left(I - e^{c M} \right) \alpha_2^- - 2M\left(I + e^{c M} \right) \alpha_4^-\right).
\end{equation}
Note that, from \refR{Rem al- M}, all terms of the previous equality are well defined. 

In the same way, for all $x \in (\gamma,b)$, we have
$$\begin{array}{rcl}
u_+(x) & = & \dis \left(e^{(x-\gamma) M} - e^{(b-x) M}\right)\alpha_1^+ + \left((x-\gamma)e^{(x-\gamma) M} - (b-x)e^{(b-x)M} \right) \alpha_2^+ \\ \ecart
&& \dis + \left(e^{(x-\gamma) M} + e^{(b-x) M}\right)\alpha_3^+ + \left((x-\gamma)e^{(x-\gamma)M} + (b-x)e^{(b-x)M}\right) \alpha_4^+ \\ \ecart
&& + F_+(x), \\ \\

u'_+(x) & = & \dis M \left(e^{(x-\gamma) M} + e^{(b-x) M}\right)\alpha_1^+ + M \left(e^{(x-\gamma) M} - e^{(b-x) M}\right)\alpha_3^+ + F'_+(x) \\ \ecart
&& \dis + \left((I + (x-\gamma)M)e^{(x-\gamma) M} + (I + (b-x)M) e^{(b-x)M} \right) \alpha_2^+ \\ \ecart
&& \dis  + \left((I + (x-\gamma)M) e^{(x-\gamma)M} - (I + (b-x)M) e^{(b-x)M}\right) \alpha_4^+, 
\end{array}$$
and
$$\begin{array}{rcl}
u''_+(x) & = & \dis M^2 \left(e^{(x-\gamma) M} - e^{(b-x) M}\right)\alpha_1^+ + M^2 \left(e^{(x-\gamma) M} + e^{(b-x) M}\right)\alpha_3^+  \\ \ecart
&& \dis + \left((2M + (x-\gamma)M^2) e^{(x-\gamma) M} - (2M + (b-x)M^2) e^{(b-x)M} \right) \alpha_2^+ \\ \ecart
&& \dis + \left((2M + (x-\gamma)M^2)e^{(x-\gamma)M} + (2M + (b-x)M^2)e^{(b-x)M}\right) \alpha_4^+ \\ \ecart
&& + F''_+(x), \\ \\

u^{(3)}_+(x) & = & \dis M^3 \left(e^{(x-\gamma) M} + e^{(b-x) M}\right)\alpha_1^+ + M^3 \left(e^{(x-\gamma) M} - e^{(b-x) M}\right)\alpha_3^+  \\ \ecart
&& \dis + \left((3M^2 + (x-\gamma)M^3)e^{(x-\gamma) M} + (3M^2 + (b-x)M^3) e^{(b-x)M} \right) \alpha_2^+ \\ \ecart
&& \dis + \left((3M^2 + (x-\gamma)M^3) e^{(x-\gamma)M} - (3M^2 + (b-x)M^3) e^{(b-x)M}\right) \alpha_4^+ \\ \ecart
&& + F^{(3)}_+(x).
\end{array}$$
Hence, we obtain
\begin{equation}\label{u'''-u' +}
\begin{array}{lllll}
\dis k_+\left(u^{(3)}_+(\gamma) - M^2 u'_+(\gamma)\right) &=& k_+ \left(2M^2 \left(I + e^{d M} \right) \alpha_2^+ + 2M^2 \left(I - e^{d M} \right) \alpha_4^+ \right) \\ \ecart
&& \dis + k_+ F^{(3)}_+(\gamma) - k_+ M^2 F'_+(\gamma). 
\end{array}
\end{equation}
Moreover, from \eqref{pb F+ M}, we have
\begin{equation}\label{u''-u +}
k_+\left(u''_+(\gamma) - M^2 u_+(\gamma)\right) = k_+\left(2M\left(I - e^{d M} \right) \alpha_2^+ + 2M \left(I + e^{d M} \right) \alpha_4^+\right).
\end{equation}
As previously, from \refR{Rem al+ M}, all terms of the previous equality are justified. Then, from \eqref{R M}, \eqref{u'''-u' -}, \eqref{u''-u -}, \eqref{u'''-u' +} and \eqref{u''-u +}, we deduce that system \eqref{syst trans M} is equivalent to
$$\left\{\begin{array}{lllll}
\dis k_+ \left(2M^2 \left(I + e^{d M} \right) \alpha_2^+ + 2M^2 \left(I - e^{d M} \right) \alpha_4^+ \right) \\ \ecart
- k_- \left(2M^2\left(I + e^{c M}\right) \alpha_2^- - 2 M^2 \left(I - e^{c M}\right) \alpha_4^-\right)  &=& \check{S} \\ \\

\dis k_+\left(2M\left(I - e^{d M} \right) \alpha_2^+ + 2M \left(I + e^{d M} \right) \alpha_4^+\right) \\ \ecart
+ k_-\left(2M\left(I - e^{c M} \right) \alpha_2^- - 2M\left(I + e^{c M} \right) \alpha_4^-\right) &=& 0.
\end{array}\right.$$
Thus, we obtain the following system
\begin{equation}\label{syst trans 2 M}
\left\{\begin{array}{lllll}
\dis k_+ \left(\left(I + e^{d M} \right) \alpha_2^+ + \left(I - e^{d M} \right) \alpha_4^+ \right) \\ \ecart
- k_- \left(\left(I + e^{c M}\right) \alpha_2^- - \left(I - e^{c M}\right) \alpha_4^-\right)  &=& \dis\frac{1}{2}M^{-2} \check{S} \\ \\

\dis k_+\left(\left(I - e^{d M} \right) \alpha_2^+ + \left(I + e^{d M} \right) \alpha_4^+\right) \\ \ecart
+ k_-\left(\left(I - e^{c M} \right) \alpha_2^- - \left(I + e^{c M} \right) \alpha_4^-\right) &=& 0.
\end{array}\right.
\end{equation}
Now, we regroup all source terms provided from \eqref{ali- M}, \eqref{phi i tilde - M}, \eqref{ali+ M} and \eqref{phi i tilde + M} in $S_1$ and $S_2$, defined by \eqref{S1 M}, \eqref{R M} and \eqref{S2 M}. Then, we deduce that system \eqref{syst trans 2 M} writes 
$$
\left\{\begin{array}{lll}
\dis -\frac{k_+}{2} U_+^{-1}\left(I + e^{dM}\right)\left( \left(I + e^{dM}\right) M \psi_1 - \left(I - e^{dM}\right) \psi_2 \right) \\ \ecart 

\dis - \frac{k_+}{2} V_+^{-1} \left(I - e^{dM}\right)\left( \left(I - e^{dM}\right) M \psi_1 - \left(I + e^{dM}\right) \psi_2 \right) \\ \ecart

\dis -\frac{k_-}{2} U_-^{-1} \left(I + e^{cM}\right)\left( \left(I + e^{cM}\right) M \psi_1 + \left(I - e^{cM}\right) \psi_2 \right) \\ \ecart 

\dis - \frac{k_-}{2} V_-^{-1} \left(I - e^{cM}\right)\left( \left(I - e^{cM}\right) M \psi_1 + \left(I + e^{cM}\right) \psi_2 \right) & = & \dis -\frac{S_1}{2} \\ \\

\dis -\frac{k_+}{2} U_+^{-1} \left(I - e^{dM}\right)\left( \left(I + e^{dM}\right) M \psi_1 - \left(I - e^{dM}\right) \psi_2 \right) \\ \ecart

\dis - \frac{k_+}{2} V_+^{-1} \left(I + e^{dM}\right)\left( \left(I - e^{dM}\right) M \psi_1 - \left(I + e^{dM}\right) \psi_2 \right) \\ \ecart

\dis + \frac{k_-}{2} U_-^{-1}\left(I - e^{cM}\right) \left( \left(I + e^{cM}\right) M \psi_1 + \left(I - e^{cM}\right) \psi_2 \right) \\ \ecart 

\dis + \frac{k_-}{2} V_-^{-1} \left(I + e^{cM}\right) \left( \left(I - e^{cM}\right) M \psi_1 + \left(I + e^{cM}\right) \psi_2 \right) & = & \dis - \frac{S_2}{2}.
\end{array}\right.
$$
Finally, we obtain
\begin{equation}\label{syst trans 3 M}
\left\{\begin{array}{rll}
\dis k_+ \left(U_+^{-1} \left(I + e^{dM}\right)^2 + V_+^{-1} \left(I - e^{dM}\right)^2\right) M \psi_1  \\ \ecart 

\dis + k_- \left(U_-^{-1} \left(I + e^{cM}\right)^2 + V_-^{-1} \left(I - e^{cM}\right)^2\right)M \psi_1 \\ \ecart 

\dis - k_+ \left(U_+^{-1} + V_+^{-1}\right) \left(I - e^{2dM}\right) \psi_2 \\ \ecart

\dis + k_- \left(U_-^{-1} + V_-^{-1} \right)  \left(I - e^{2cM}\right) \psi_2 & = & S_1 \\ \\

\dis k_+ \left(U_+^{-1} + V_+^{-1} \right) \left(I - e^{2dM}\right) M \psi_1 \\ \ecart

\dis - k_- \left( U_-^{-1} + V_-^{-1} \right) \left(I - e^{2cM}\right) M \psi_1 \\ \ecart 

\dis - k_+ \left(U_+^{-1}\left(I - e^{dM}\right)^2 + V_+^{-1} \left(I + e^{dM}\right)^2 \right) \psi_2 \\ \ecart

\dis - k_- \left(U_-^{-1}\left(I - e^{cM}\right)^2 + V_-^{-1} \left(I + e^{cM}\right)^2 \right) \psi_2 & = & S_2.
\end{array}\right.
\end{equation}
Then, using \eqref{P1+ P2+ P3+ M} and \eqref{P1- P2- P3- M} system \eqref{syst trans 3 M} become system \eqref{syst trans M}. So, $(\psi_1,\psi_2)$ is solution of system \eqref{syst trans M}.

Conversely, if \eqref{reg phi +-} holds and system \eqref{syst trans M} has a unique solution $(\psi_1,\psi_2)$ such that
$$\psi_1 \in (D(A),X)_{1+\frac{1}{2p},p} \quad \text{and} \quad \psi_2 \in (D(A),X)_{1+\frac{1}{2}+\frac{1}{2p},p},$$ 
Then, considering $u_-$ (respectively $u_+$) the unique classical solution of problem $(P_-)$ (respectively problem $(P_+)$), we obtain that 
$$u=\left\{ 
\begin{array}{l}
u_- \text{ \ \ in \ }\Omega_- \\ 
u_+ \text{ \ \ in \ }\Omega_+,
\end{array}\right.$$ 
is the unique classical solution of problem $(P)$.
\end{proof}

\subsection{Functional calculus} \label{sect funct calc}

To prove \refT{Th principal}, it remains, from \refT{Th syst trans M}, to solve system \eqref{syst trans M}. To this end, we have to show that the determinant operator of system \eqref{syst trans M} is an invertible operator using functional calculus.

We first recall some classical notations. Let $\theta \in (0,\pi)$, we denote by $H(S_\theta)$ the space of holomorphic functions on $S_\theta$ (defined by \eqref{defsector}). Moreover, we consider the following subspace of~$H(S_\theta)$:
$$
\mathcal{E}_\infty(S_\theta)\;:=\;\left\{f \in H(S_\theta) : f = O(|z|^{-s}) ~(|z| \rightarrow +\infty) \text{ for some } s > 0\right\}.
$$
Thus, $\mathcal{E}_\infty(S_\theta)$ is the space of polynomially decreasing holomorphic functions at $+\infty$. 

Let $T$ be an invertible sectorial operator of angle $\theta_T \in (0,\pi)$ and let $f\in \mathcal{E}_\infty(S_\theta)$, with $\theta \in (\theta_T,\pi)$, then, by functional calculus, we can define $f(T) \in \L(X)$, see \cite{haase}, p. 45.

We recall a useful invertibility result from \cite{LMMT}, Lemma 5.3.

\begin{Lem}[\cite{LMMT}]\label{lem inv}
Let $P$ be an invertible sectorial operator in $X$ with angle $\theta$, for all $\theta \in (0, \pi)$. Let \mbox{$G \in H(S_{\theta})$}, for some $\theta \in (0, \pi)$, such that
\begin{enumerate}
\item[$(i)$] $1-G \in \mathcal{E}_\infty(S_{\theta})$,

\item[$(ii)$] $G (x)\neq 0$ for any $x \in \RR_+ \setminus \{0\}$.
\end{enumerate}
Then, $G(P) \in \L(X)$, is invertible with bounded inverse.
\end{Lem}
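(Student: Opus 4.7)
The plan is to write $G = 1 + (G-1)$ so that the second summand falls within the $\E_\infty$-calculus, and then to deduce invertibility of $G(P)$ by a spectral mapping argument.

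\emph{Step 1 (boundedness).} Hypothesis $(i)$ states that $G - 1 = -(1-G) \in \E_\infty(S_\theta)$. Because $P$ is sectorial of every angle in $(0,\pi)$, we may fix a sectoriality angle strictly less than $\theta$, and the $\E_\infty$-functional calculus (Haase, Chap.~2) yields $(G-1)(P) \in \L(X)$. Consequently
\[
G(P) \;=\; I + (G-1)(P) \;\in\; \L(X).
\]

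\emph{Step 2 (invertibility via spectral mapping).} I would apply the spectral mapping theorem for the extended holomorphic functional calculus to $G$. Since $P$ is invertible and sectorial of any positive angle, $\sigma(P) \subset (0,+\infty)$; since $G - 1 \in \E_\infty(S_\theta)$, the function $G$ belongs to the extended algebra with limiting value $G(\infty) = 1$. The spectral mapping theorem then gives
\[
\sigma(G(P)) \;\subset\; \overline{G(\sigma(P))} \cup \{G(\infty)\}.
\]
By hypothesis $(ii)$, $G$ does not vanish on $(0,+\infty) \supset \sigma(P)$, and $G(\infty) = 1 \neq 0$; hence $0 \notin \sigma(G(P))$. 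Therefore $G(P)$ is bijective on $X$, and its inverse is automatically in $\L(X)$ by the open mapping theorem.

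\emph{Main obstacle and backup.} The delicate point is Step~2: verifying that $G$ sits within the class of functions for which a spectral mapping statement of the above form is available, and, in particular, controlling the behaviour of $G$ near the boundary point $0$ of the sector (where $G$ is only assumed holomorphic on the open $S_\theta$ and may blow up). This is ultimately handled by the invertibility of $P$, which keeps $\sigma(P)$ bounded away from $0$ and makes the compatibility condition at $0$ in the extended calculus automatic. If a purely constructive route is preferred, one can choose a subsector $S_{\theta'} \subset S_\theta$ on which $G$ has no zero (such a sector exists because $G \to 1$ uniformly at infinity and $G \neq 0$ on $(0,+\infty)$ by continuity), set $H := 1 - 1/G$, check that $H \in \E_\infty(S_{\theta'})$, and use multiplicativity of the calculus to conclude $(I - H(P))\, G(P) = I$, which exhibits an explicit bounded inverse of $G(P)$.
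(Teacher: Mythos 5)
The paper does not actually prove this lemma: it is quoted verbatim from \cite{LMMT} (Lemma~5.3 there), so there is no in-paper argument to compare against. Your proposal is essentially sound, and your ``backup'' route is precisely the standard proof of the cited result: invert $G$ pointwise, check that $1-1/G$ lies in $\E_\infty$, and conclude by multiplicativity of the calculus. Two points in your write-up need tightening. First, your primary route uses the inclusion $\sigma(G(P)) \subset \overline{G(\sigma(P))} \cup \{G(\infty)\}$, which is the \emph{hard} direction of the spectral mapping theorem; the inclusion that comes for free in the natural functional calculus is the opposite one, $G(\tilde\sigma(P)) \subset \tilde\sigma(G(P))$. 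To invoke the equality version (e.g.\ Haase, Theorem~2.7.8) you must check that $G$ is quasi-regular at every point of $\tilde\sigma(P) = \sigma(P)\cup\{\infty\} \subset (0,+\infty)\cup\{\infty\}$; this does hold here ($G$ is holomorphic on $(0,+\infty)$ and tends to $1$ at infinity), but it is the substance of the step and should be stated, not just asserted. Second, in the backup argument the claim that there exists a full subsector $S_{\theta'}$ on which $G$ has no zeros is not justified: since $1-G\in\E_\infty(S_\theta)$ imposes no control near the vertex, the zeros of $G$ may accumulate at $0$ inside \emph{every} subsector, and hypothesis $(ii)$ only excludes zeros on the positive ray. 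What is true, and what suffices, is that $G$ is zero-free on a truncated sector $\overline{S_{\theta'}}\cap\{|z|\geq\varepsilon\}$; this follows by a compactness argument from $(ii)$ together with the uniform convergence $G\to 1$ at infinity, and it is enough because $P$ is invertible, so $\sigma(P)$ avoids a ball around the origin and the $\E_\infty$-calculus for invertible sectorial operators only involves a contour contained in such a truncated sector. With that adjustment, $H:=1-1/G\in\E_\infty$ on the truncated sector and $(I-H(P))\,G(P)=G(P)\,(I-H(P))=I$ exhibits the bounded inverse.
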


Now, to inverse the determinant of system \eqref{syst trans M}, we introduce some holomorphic functions which will play the role of $G$ in the previous lemma. Then, we study them on the positive real axis. 

Let $\delta > 0$ and $z \in \CC\setminus \RR_-$. We set
$$\left\{\begin{array}{lll}
u_\delta (z) & = & \dis 1 - e^{-2\delta\sqrt{z}} - 2\delta \sqrt{z} e^{-\delta\sqrt{z}} \\ \ecart

v_\delta (z) & = & \dis 1 - e^{-2\delta\sqrt{z}} + 2\delta \sqrt{z} e^{-\delta\sqrt{z}}.
\end{array}\right.$$
Then, we have
$$\begin{array}{lllll}
U_+ = u_d(-A), & U_- = u_c(-A), & V_+ = v_d(-A) & \text{and} & V_- = v_c(-A).
\end{array}$$ 
We set 
$$\CC_+ = \{z \in \CC : \text{Re}(z) \geqslant 0\}.$$
Moreover, from \cite{thorel}, Lemma 4.4, we have $u_\delta (z) \neq 0$ and $v_\delta (z) \neq 0$ for $z \in \CC_+ \setminus \{0\}$, thus we note 
$$\left\{\begin{array}{ccl}
f_{\delta,1} (z) & = & \dis u_\delta^{-1} (z) \left(1 + e^{-\delta\sqrt{z}}\right)^2 + v_\delta^{-1}(z) \left(1 - e^{-\delta\sqrt{z}}\right)^2 \\ \ecart

f_{\delta,2} (z) & = & \dis \left(u_\delta^{-1}(z) + v_\delta^{-1}(z) \right) \left(1 - e^{-2\delta\sqrt{z}}\right) \\ \ecart

f_{\delta,3} (z) & = & \dis u_\delta^{-1}(z) \left(1 - e^{-\delta\sqrt{z}}\right)^2 + v_\delta^{-1}(z) \left(1 + e^{-\delta\sqrt{z}}\right)^2 \\ \ecart

g_\delta (z) & = & \dis 16\, u_\delta^{-1}(z) v_\delta^{-1}(z) e^{-2\delta\sqrt{z}}.
\end{array}\right.$$
Thus, we obtain that
$$\left\{\begin{array}{lll}
P_1^+ = k_+f_{d,1}(-A), & P_2^+ = k_+f_{d,2}(-A), & P_3^+ = k_+f_{d,3}(-A) \\ \ecart
P_1^- = k_-f_{c,1}(-A), & P_2^- = k_-f_{c,2}(-A), & P_3^- = k_-f_{c,3}(-A).
\end{array}\right.$$
Moreover, for $z \in \CC_+\setminus \{0\}$, we define
$$
f(z) := k_+^2 g_d(z) + k_-^2 g_c(z) + k_+k_- \left(f_{d,1} (z) f_{c,3} (z) + f_{c,1} (z)f_{d,3} (z) + 2 f_{d,2} (z)f_{c,2} (z)\right).
$$
Note that $f \in H(S_\theta)$, for all $\theta \in (0,\pi)$. It follows that 
\begin{equation}\label{f(-A)}
f(-A) = 16k_+^2 U_+^{-1}V_+^{-1} e^{2dM} + 16k_-^2 U_-^{-1}V_-^{-1} e^{2cM} + P_1^+P_3^- + P_1^-P_3^+ + 2 P_2^+ P_2^-.
\end{equation}

\begin{Lem} \label{lem det pas 0 M}
For all $x \in \RR_+ \setminus \{0\}$, then $f(x)$ does not vanish.
\end{Lem}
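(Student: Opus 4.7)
\medskip

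\textbf{Proof plan.} The plan is to show directly that $f(x)>0$ for every $x>0$, which clearly implies the claim. Introduce the change of variable $s=\sqrt{x}>0$ and set $\alpha:=e^{-cs}\in(0,1)$, $\beta:=e^{-ds}\in(0,1)$. Writing $u_c=1-\alpha^2-2cs\alpha$, $v_c=1-\alpha^2+2cs\alpha$ (and analogously for $u_d,v_d$), one first checks that $v_c,v_d>0$ are obvious and that $u_c,u_d>0$ for $s>0$ since the function $h(t)=1-e^{-2t}-2te^{-t}$ satisfies $h(0)=0$ and $h'(t)=2e^{-t}(e^{-t}-1+t)>0$ for $t>0$. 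Hence the common denominator $u_cv_cu_dv_d$ is strictly positive and the sign of $f(x)$ is the sign of the numerator after clearing denominators.

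Next, I would carry out the explicit computations:
\begin{align*}
u_cv_cf_{c,1}(x)&=2(1-\alpha^4)+8cs\alpha^2, & u_cv_cf_{c,3}(x)&=2(1-\alpha^4)-8cs\alpha^2, \\
u_cv_cf_{c,2}(x)&=2(1-\alpha^2)^2, & u_cv_cg_c(x)&=16\alpha^2,
\end{align*}
and symmetric expressions for the $d$-quantities. Using $(A+B)(C-D)+(C+D)(A-B)=2AC-2BD$, the mixed product $f_{d,1}f_{c,3}+f_{c,1}f_{d,3}+2f_{d,2}f_{c,2}$ collapses into
$(1-\alpha^4)(1-\beta^4)+(1-\alpha^2)^2(1-\beta^2)^2-8cds^2\alpha^2\beta^2$, up to the factor $16/(u_cv_cu_dv_d)$. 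Factoring $(1-\alpha^4)=(1-\alpha^2)(1+\alpha^2)$ and similarly in $\beta$ gives the tidy identity $(1-\alpha^4)(1-\beta^4)+(1-\alpha^2)^2(1-\beta^2)^2=2(1-\alpha^2)(1-\beta^2)(1+\alpha^2\beta^2)$. Combined with $u_cv_c=(1-\alpha^2)^2-4c^2s^2\alpha^2$ and $u_dv_d=(1-\beta^2)^2-4d^2s^2\beta^2$, one obtains, after cancellation and with $p:=1-\alpha^2$, $q:=1-\beta^2$,
\begin{equation*}
\frac{u_cv_cu_dv_d}{16}\,f(x) \;=\; \underbrace{k_+^2\beta^2 p^2+k_-^2\alpha^2 q^2+k_+k_-pq(1+\alpha^2\beta^2)}_{=:\,\mathcal{P}}\;-\;4s^2\alpha^2\beta^2(ck_++dk_-)^2.
\end{equation*}

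The key algebraic observation is then the sum-of-squares decomposition
\begin{equation*}
\mathcal{P}\;=\;\bigl(k_+\beta p+k_-\alpha q\bigr)^2+k_+k_-\,pq\,(1-\alpha\beta)^2,
\end{equation*}
obtained by expanding $(k_+\beta p+k_-\alpha q)^2$ and using $1+\alpha^2\beta^2-2\alpha\beta=(1-\alpha\beta)^2$. The remaining analytic ingredient is the elementary inequality $\sinh(cs)\geq cs$, which, after multiplying by $2e^{-(c+d)s}$, reads
\begin{equation*}
\beta p \;=\; e^{-ds}-e^{-(2c+d)s} \;\geq\; 2cs\,e^{-(c+d)s}\;=\;2cs\,\alpha\beta,
\end{equation*}
with strict inequality for $s>0$; and likewise $\alpha q\geq 2ds\alpha\beta$, strictly for $s>0$. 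Summing with the positive weights $k_\pm$ and squaring gives $(k_+\beta p+k_-\alpha q)^2>4s^2\alpha^2\beta^2(ck_++dk_-)^2$ for $s>0$. Plugged back in, this yields
\begin{equation*}
\frac{u_cv_cu_dv_d}{16}\,f(x)\;>\;k_+k_-\,pq\,(1-\alpha\beta)^2\;>\;0,
\end{equation*}
since $k_\pm>0$, $p,q\in(0,1)$ and $\alpha\beta\in(0,1)$. As $u_cv_cu_dv_d>0$, this proves $f(x)>0$ for all $x>0$.

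The main obstacle is the bookkeeping: finding the sum-of-squares decomposition of $\mathcal{P}$ and matching it against the negative term $-4s^2\alpha^2\beta^2(ck_++dk_-)^2$ via the sharp comparison $\beta(1-\alpha^2)\geq 2cs\alpha\beta$. Everything else is a careful but routine reduction to elementary scalar quantities in $\alpha,\beta,s$.
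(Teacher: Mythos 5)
Your proof is correct, but it takes a genuinely different and far more computational route than the paper's, which is essentially a three-line observation: for real $x>0$ one has $u_\delta(x),v_\delta(x)>0$ (nonvanishing on $\CC_+\setminus\{0\}$ by Lemma 4.4 of \cite{thorel}, or equivalently your monotonicity argument for $h$), and then each of $f_{\delta,1}(x)$, $f_{\delta,2}(x)$, $f_{\delta,3}(x)$, $g_\delta(x)$ is manifestly positive term by term, since every summand in their definitions is a product of $u_\delta^{-1}(x)$ or $v_\delta^{-1}(x)$ with one of the positive quantities $(1\pm e^{-\delta\sqrt{x}})^2$, $1-e^{-2\delta\sqrt{x}}$ or $e^{-2\delta\sqrt{x}}$; hence $f(x)$ is a sum of positive terms with positive coefficients $k_+^2$, $k_-^2$, $k_+k_-$ and there is nothing to dominate. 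By clearing denominators you destroy this structure and manufacture a genuine negative term $-4s^2\alpha^2\beta^2(ck_++dk_-)^2$, which you then must (and correctly do) beat via the sum-of-squares identity for $\mathcal{P}$ together with $\sinh(cs)\geq cs$. What your version buys is an explicit strictly positive lower bound, namely $f(x)\geq 16\,k_+k_-\,pq\,(1-\alpha\beta)^2/(u_cv_cu_dv_d)$, which the paper's argument does not provide; what it costs is a page of bookkeeping. One slip to note: your intermediate claim that the mixed product equals $\frac{16}{u_cv_cu_dv_d}\bigl[(1-\alpha^4)(1-\beta^4)+(1-\alpha^2)^2(1-\beta^2)^2-8cds^2\alpha^2\beta^2\bigr]$ carries an extra factor $2$ on the first two terms (the correct prefactor for those is $8$); your final displayed identity for $\frac{u_cv_cu_dv_d}{16}f(x)$ is nevertheless correct, so this is a typo rather than a gap.
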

\begin{proof}
Let $\delta>0$, from Lemma 4.4, in \cite{thorel}, for all $z \in \CC_+ \setminus \{0\}$, we have 
$$ 0 < |u_\delta (z)| \quad \text{and} \quad 0 < |v_\delta (z)|.$$
Thus, for all $x > 0$, one has $u_\delta (x)$, $v_\delta (x) > 0$. Moreover, for all $x > 0$, we deduce that
$$f_{\delta,1} (x), f_{\delta,2} (x), f_{\delta,3} (x), g_\delta(x) > 0$$ 
and since $k_+k_- > 0$, it follows that
$$ f(x) = k_+^2 g_d(x) + k_-^2 g_c(x) + k_+k_- \left(f_{d,1} (x) f_{c,3} (x) + f_{c,1} (x)f_{d,3} (x) + 2 f_{d,2} (x)f_{c,2} (x)\right) > 0.$$
\end{proof}

\section{Proof of the main result}\label{Section proof main res}

Assume that (P) has a unique classical solution, then, \eqref{reg phi +-} holds from \refT{Th syst trans M}. \\
Conversely, Assume that \eqref{reg phi +-} holds. From \refT{Th syst trans M}, it suffices to show that system \eqref{syst trans M} has a unique solution such that \eqref{reg (psi1,psi2) M} is satisfies. 

The proof is divided in three parts. In the first part, we make explicit the determinant of system \eqref{syst trans M}. Then, in the second part, we show the uniqueness of the solution, to this end, we inverse the determinant with the help of functional calculus. Finally, in the last part, we prove that $\psi_1$ and $\psi_2$ have the expected regularity. 

\subsection{Calculus of the determinant}

In this section, we make explicit the determinant. Recall system \eqref{syst trans M}:
$$
\left\{\begin{array}{lcl}
\left(P_1^+ + P_1^-\right) M \psi_1 - \left(P_2^+ - P_2^-\right) \psi_2 & = & S_1 \\ \ecart

\left(P_2^+ - P_2^-\right) M \psi_1 - \left(P_3^+ + P_3^-\right) \psi_2 & = & S_2.
\end{array}\right.
$$
Moreover, we writes the previous system as the matrix equation $\Lambda \Psi = S$, where
$$\Lambda = \left(\begin{array}{cc}
M \left(P_1^+ + P_1^-\right) &  - \left(P_2^+ - P_2^-\right) \\
M \left(P_2^+ - P_2^-\right) &  - \left(P_3^+ + P_3^-\right)
\end{array}\right), \quad \Psi = \left(\begin{array}{c}
\psi_1 \\
\psi_2
\end{array}\right), \quad S = \left(\begin{array}{c}
S_1 \\
S_2
\end{array}\right).$$
To solve system \eqref{syst trans M}, we calculate the determinant of the associated matrix $\Lambda$:
$$\begin{array}{lll}
\det(\Lambda) & := &\dis - M \left(P_1^+ + P_1^-\right) \left(P_3^+ + P_3^-\right) + M \left(P_2^+ - P_2^-\right)^2 \\ \ecart

& = & \dis - M \left(P_1^+P_3^+ + P_1^+P_3^- + P_1^-P_3^+ + P_1^-P_3^- - \left(P_2^+\right)^2 - \left(P_2^-\right)^2 + 2 P_2^+ P_2^- \right) \\ \ecart

& = & \dis - M \left(P_1^+P_3^+ - \left(P_2^+\right)^2  + P_1^-P_3^- - \left(P_2^-\right)^2 + P_1^+P_3^- + P_1^-P_3^+ + 2 P_2^+ P_2^- \right).
\end{array}$$
We begin to calculate each terms separately. Then, we have
$$\begin{array}{lll}
P_1^+P_3^+ & = &\dis k_+^2 \left(U_+^{-1} \left(I + e^{dM}\right)^2 + V_+^{-1} \left(I - e^{dM}\right)^2\right) \left(U_+^{-1}\left(I - e^{dM}\right)^2 + V_+^{-1} \left(I + e^{dM}\right)^2 \right) \\ \ecart

& = & \dis k_+^2 \left( \left(U_+^{-2} + V_+^{-2} \right) \left(I - e^{2dM}\right)^2 + U_+^{-1}V_+^{-1} \left( \left(I + e^{dM}\right)^4 + \left(I - e^{dM}\right)^4\right) \right)
\end{array}$$
and
$$\left(I + e^{dM}\right)^4 + \left(I - e^{dM}\right)^4 = 2\left(I + e^{2dM}\right)^2 + 8e^{2dM}.$$
Thus
$$P_1^+P_3^+ = k_+^2 \left(\left(U_+^{-2} + V_+^{-2} \right) \left(I - e^{2dM}\right)^2 + 2U_+^{-1}V_+^{-1}\left(I + e^{2dM}\right)^2 + 8U_+^{-1}V_+^{-1}e^{2dM}\right).$$
Moreover, we have
$$\begin{array}{lll}
\left(P_2^+\right)^2 & = & \dis k_+^2 \left(U_+^{-1} + V_+^{-1}\right)^2 \left(I - e^{2dM}\right)^2 \\ \ecart 
& = & \dis k_+^2 \left(U_+^{-2} + V_+^{-2}\right) \left(I - e^{2dM}\right)^2 + 2k_+^2 U_+^{-1}V_+^{-1}\left(I - e^{2dM}\right)^2.
\end{array}$$
It follows 
$$\begin{array}{lll}
P_1^+P_3^+ - \left(P_2^+\right)^2 & = & \dis k_+^2 \left(\left(U_+^{-2} + V_+^{-2} \right) \left(I - e^{2dM}\right)^2 + 2U_+^{-1}V_+^{-1}\left(I + e^{2dM}\right)^2 \right) \\ \ecart 
&& + 8 k_+^2 U_+^{-1}V_+^{-1}e^{2dM} \\ \ecart
&& \dis - k_+^2 \left(U_+^{-2} + V_+^{-2}\right) \left(I - e^{2dM}\right)^2 - 2k_+^2 U_+^{-1}V_+^{-1}\left(I - e^{2dM}\right)^2 \\ \\

& = & \dis 2k_+^2 U_+^{-1}V_+^{-1} \left(\left(I + e^{2dM}\right)^2 - \left(I - e^{2dM}\right)^2+ 4 e^{2dM}\right) \\ \\

& = & \dis 2k_+^2 U_+^{-1}V_+^{-1} \left(4e^{2dM} + 4 e^{2dM}\right) \\ \\

& = & \dis 16k_+^2 U_+^{-1}V_+^{-1} e^{2dM}.
\end{array}$$
In the same way, replacing respectively $k_+$, $U_+^{-1}$, $V_+^{-1}$ and $d$ by $k_-$, $U_-^{-1}$, $V_-^{-1}$ and $c$, we obtain 
$$P_1^-P_3^- - \left(P_2^-\right)^2 = 16k_-^2 U_-^{-1}V_-^{-1} e^{2cM}.$$
Thus, the determinant of $\Lambda$ writes
\begin{equation}\label{det M}
\det(\Lambda) = - M \left(16k_+^2 U_+^{-1}V_+^{-1} e^{2dM} + 16k_-^2 U_-^{-1}V_-^{-1} e^{2cM} + P_1^+P_3^- + P_1^-P_3^+ + 2 P_2^+ P_2^- \right).
\end{equation}
Finally, from \eqref{f(-A)} and \eqref{det M}, we obtain 
\begin{equation}\label{det = f(-A) M}
\det(\Lambda) = -M f(-A).
\end{equation}
Note that, since $f(-A) \in \L(X)$, it follows that $D(\det(\Lambda)) = D(M)$.

\subsection{Inversion of the determinant}

Let $C_1$, $C_2$ two linear operators in $X$. We note $C_1 \sim C_2$ to means that \mbox{$C_1 = C_2 + \Sigma$}, where $\Sigma$ is a finite sum of term of type $k M^n e^{\alpha M}$, with $k\in \RR$, $n\in \NN$ and $\alpha \in \RR_+\setminus\{0\}$. Note that $\Sigma$ is a regular term in the sense: 
$$\Sigma \in \L(X) \quad\text{with}\quad \Sigma(X) \subset D(M^\infty):= \bigcap_{k\geqslant0}D(M^k).$$

Since $U_\pm \sim I$ and $V_\pm \sim I$, then setting $W = U_+ U_- V_+ V_- \sim I$, we obtain
$$\left\{\begin{array}{lcllcl}
WP_1^+ & \sim & 2k_+I, & WP_1^- & \sim & 2k_-I, \\ \ecart
WP_2^+ & \sim & 2k_+I, & WP_2^- & \sim & 2k_-I, \\ \ecart
WP_3^+ & \sim & 2k_+I, & WP_3^- & \sim & 2k_-I.
\end{array}\right.$$
Moreover, we have
$$16k_+^2 W^2 U_+^{-1}V_+^{-1} e^{2dM} \sim 0 \quad \text{and} \quad 16k_-^2 W^2 U_-^{-1}V_-^{-1} e^{2cM} \sim 0.$$
We then deduce the following relation 
$$\begin{array}{lll}
\dis - M^{-1} W^2 \det(\Lambda) & = & \dis 16k_+^2 W^2 U_+^{-1}V_+^{-1} e^{2dM} + 16k_-^2 W^2 U_-^{-1}V_-^{-1} e^{2cM} \\ \ecart
&& \dis + WP_1^+ WP_3^- + WP_1^- WP_3^+ + 2 WP_2^+ WP_2^- \\ \\
& \sim & \dis 4 k_+k_-I + 4 k_+k_-I + 8 k_+k_-I = 16 k_+k_-I. 
\end{array}$$
Thus, we get 
\begin{equation}\label{det 2 M}
\det(\Lambda) = -W^{-2}M\left(16k_+k_-I + \sum_{j\in J} k_j M^{n_j} e^{\alpha_j M}\right),
\end{equation}
where $J$ is a finite set and for all $j \in J$:
$$k_j \in \RR, n_j \in \NN \quad \text{and} \quad \alpha_j \in \RR_+\setminus\{0\}.$$
From \eqref{det 2 M}, we have
\begin{equation}\label{det 3 M}
\det(\Lambda) = -16 k_+k_- W^{-2}M F, 
\end{equation}
where
\begin{equation}\label{F M}
F = I + \sum_{j\in J} \frac{k_j}{16 k_+k_-} M^{n_j} e^{\alpha_j M}.
\end{equation}
For $z \in \CC \setminus \RR_-$, we set
$$\tilde{f}(z) = 1 + \sum_{j\in J} \frac{k_j}{16 k_+k_-} \left(-\sqrt{z}\right)^{n_j} e^{-\alpha_j \sqrt{z}}.$$
Then, $F = \tilde{f}(-A)$ and from \eqref{det = f(-A) M} and \eqref{det 3 M}, we have
$$f(-A) = -M^{-1}\det(\Lambda) = 16 k_+k_- W^{-2} \tilde{f}(-A).$$
Thus, by construction, for $z \in \CC \setminus \RR_-$, the link between $f$ and $\tilde{f}$ is 
\begin{equation}\label{f(z) = tilde f (z) M}
f(z) = 16 k_+k_- u_d^{-2}(z)u_c^{-2}(z)v_d^{-2}(z)v_c^{-2}(z) \tilde{f}(z).
\end{equation}
\begin{Prop}\label{prop inv F M}
The operator $F \in \L(X)$ defined by \eqref{F M}, is invertible with bounded inverse.
\end{Prop}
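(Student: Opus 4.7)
The plan is to apply \refL{lem inv} with $P = -A$ and $G = \tilde{f}$, where $\tilde{f}$ is the scalar function introduced just before the statement and which satisfies $F = \tilde{f}(-A)$. By $(H_2)$, $0 \in \rho(A)$, so $-A$ is invertible, and by $(H_4)$, $-A \in \mathrm{Sect}(0)$, i.e.\ $-A$ is sectorial of every angle $\theta \in (0,\pi)$. Hence the abstract hypotheses of \refL{lem inv} on $P$ are in place, and only the two conditions $(i)$ and $(ii)$ on $G = \tilde{f}$ remain to be verified.

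First I would check that $\tilde{f} \in H(S_\theta)$ for every $\theta \in (0,\pi)$ and that condition $(i)$ holds. The function $\tilde{f}$ is built from the principal branch of $\sqrt{z}$ and finitely many terms $\frac{k_j}{16 k_+ k_-}(-\sqrt{z})^{n_j} e^{-\alpha_j \sqrt{z}}$ with $\alpha_j>0$, hence it is holomorphic on $\CC \setminus \RR_-$. For the decay, fix $\theta \in (0,\pi)$ and note that for $z \in S_\theta$ one has $\mathrm{Re}(\sqrt{z}) = |z|^{1/2}\cos(\arg(z)/2) \geq \cos(\theta/2)|z|^{1/2}$, with $\cos(\theta/2)>0$ since $\theta/2 < \pi/2$. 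Therefore
$$
\left|1-\tilde{f}(z)\right| \;\leq\; \sum_{j\in J} \frac{|k_j|}{16\,k_+k_-}\, |z|^{n_j/2}\, e^{-\alpha_j \cos(\theta/2)\,|z|^{1/2}},
$$
and each summand is $O(|z|^{-s})$ as $|z|\to+\infty$ for every $s>0$. Hence $1-\tilde{f}\in \mathcal{E}_\infty(S_\theta)$, which settles $(i)$.

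Next I would verify condition $(ii)$, namely $\tilde{f}(x)\neq 0$ for all $x>0$. This is exactly where \refL{lem det pas 0 M} combines with the factorization \eqref{f(z) = tilde f (z) M}. By Lemma 4.4 of \cite{thorel}, $u_d(x), u_c(x), v_d(x), v_c(x) > 0$ for all $x>0$, and by \refL{lem det pas 0 M}, $f(x)>0$ for all $x>0$. Substituting these positivity statements into
$$
f(x) = 16\, k_+ k_-\, u_d^{-2}(x) u_c^{-2}(x) v_d^{-2}(x) v_c^{-2}(x)\, \tilde{f}(x)
$$
forces $\tilde{f}(x)>0$, and in particular $\tilde{f}(x)\neq 0$.

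Having checked both conditions, \refL{lem inv} yields $\tilde{f}(-A) = F \in \L(X)$ invertible with bounded inverse, which is the desired conclusion. The main (and essentially only) technical point is the uniform exponential decay of $1-\tilde{f}$ on the sector $S_\theta$; this is routine because the sum is finite and each $\alpha_j>0$, so the exponential factor $e^{-\alpha_j\cos(\theta/2)|z|^{1/2}}$ dominates the polynomial growth $|z|^{n_j/2}$ throughout $S_\theta$. The nonvanishing on $\RR_+\setminus\{0\}$ is a direct consequence of the positivity results already established, so no new computation is needed.
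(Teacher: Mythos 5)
Your proposal is correct and follows essentially the same route as the paper: both apply \refL{lem inv} to $G=\tilde{f}$ and $P=-A$, deduce $1-\tilde f\in\mathcal{E}_\infty(S_\theta)$ from the exponential decay of the finitely many terms $(-\sqrt{z})^{n_j}e^{-\alpha_j\sqrt{z}}$, and obtain the nonvanishing of $\tilde f$ on $\RR_+\setminus\{0\}$ from \refL{lem det pas 0 M} together with the factorization \eqref{f(z) = tilde f (z) M} and the nonvanishing of $u_c,u_d,v_c,v_d$. You simply spell out the decay estimate $\mathrm{Re}(\sqrt z)\geq\cos(\theta/2)|z|^{1/2}$ in more detail than the paper does.
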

\begin{proof}
Note that $f,\tilde{f} \in H(S_\theta)$, for a given $\theta \in (0,\pi)$. Moreover, for $z \in \CC\setminus\RR_-$ and $j\in J$, functions $\dis\frac{k_j}{16 k_+k_-} \left(-\sqrt{z}\right)^{n_j}$ are polynomial. Thus, $1-\tilde{f} \in \mathcal{E}_\infty (S_\theta)$.

From Lemma 4.4 in \cite{thorel}, $u_d$, $u_c$, $v_d$ and $v_c$ do not vanish on $\CC_+\setminus\{0\}$. Moreover, due to \refL{lem det pas 0 M}, $f$ does not vanish on $\RR_+ \setminus\{0\}$, Thus, we deduce from \eqref{f(z) = tilde f (z) M} that $\tilde{f}$ do not vanish on $\RR_+ \setminus\{0\}$. 

Finally, we apply \refL{lem inv} with $P = -A$ and $G = \tilde{f}$ to obtain that $F = \tilde{f}(-A) \in \L(X)$ is invertible with bounded inverse. 
\end{proof}

We are now in position to prove the main result of this section.

\begin{Prop}\label{prop inv det M}
The operator $\det(\Lambda)$, defined by \eqref{det 3 M} is invertible with bounded inverse.
\end{Prop}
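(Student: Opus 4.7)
The plan is to read the identity \eqref{det 3 M}, namely
$$\det(\Lambda) = -16\,k_+k_-\,W^{-2}\,M\,F,$$
as a composition of four commuting operators, and to show separately that each factor is invertible with bounded inverse. Since all the pieces are built by functional calculus from $-A$ (hence from $M$), they pairwise commute, so the inverse of $\det(\Lambda)$ will simply be the product of the inverses in reverse order, up to the scalar $-1/(16k_+k_-)$.

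First I would handle $W^{-2}$: by \refL{lem U+- V+- inv} the four operators $U_\pm$ and $V_\pm$ belong to $\L(X)$ and are invertible with bounded inverse, hence so is $W = U_+U_-V_+V_-$, and consequently $W^{\pm 2} \in \L(X)$. Next I would handle $M$: assumption $(H_2)$ gives $0 \in \rho(A)$, i.e. $-A$ is invertible in $X$, and from the definition \eqref{defM} together with the functional calculus recalled in \refs{sect funct calc}, this forces $0 \in \rho(M)$, so $M$ admits the bounded inverse $M^{-1} = -(-A)^{-1/2}$, with $D(M) = R(M^{-1})$. Finally $F$ is invertible with bounded inverse by \refP{prop inv F M}, which I may invoke as an already-proved result.

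It remains only to assemble these facts. Since $W$, $M$ and $F$ are all obtained by $\E_\infty$-calculus from $-A$, they commute pairwise, and therefore
$$\det(\Lambda)^{-1} = -\frac{1}{16\,k_+k_-}\,F^{-1}\,M^{-1}\,W^{2},$$
which is defined on all of $X$ and maps into $D(M) = D(\det(\Lambda))$ as noted after \eqref{det = f(-A) M}. Boundedness of $\det(\Lambda)^{-1}$ on the range $R(\det(\Lambda)) = X$ follows from boundedness of each of the three factors $F^{-1}, M^{-1}, W^{2}$ in $\L(X)$.

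The only delicate point in this argument is the invertibility of $F$, but that is exactly the content of \refP{prop inv F M}; everything else is a bookkeeping check that the remaining factors $W^{-2}$, $M$ and the scalar $16 k_+k_-$ are harmless. So no substantive new work is needed beyond citing \refL{lem U+- V+- inv}, hypothesis $(H_2)$ and \refP{prop inv F M}.
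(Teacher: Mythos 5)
Your proposal is correct and follows exactly the paper's own argument: factor $\det(\Lambda)=-16k_+k_-W^{-2}MF$, then invoke \refL{lem U+- V+- inv} for $W^{-2}$, hypothesis $(H_2)$ (giving $0\in\rho(M)$ since $M=-\sqrt{-A}$) for $M$, and \refP{prop inv F M} for $F$. The paper states this more tersely, but the content is identical; your extra remarks on commutation and on $M^{-1}=-(-A)^{-1/2}$ are harmless elaborations.
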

\begin{proof}
From \refL{lem U+- V+- inv}, $U_+$, $U_-$, $V_+$ and $V_-$ are bounded invertible operators with bounded inverse. So we deduce that $W^{-2}$ is invertible with bounded inverse. Moreover, from $(H_2)$, \eqref{det 3 M} and \refP{prop inv F M}, we obtain that $\det(\Lambda)$ is invertible with bounded inverse. 
\end{proof}

\subsection{Regularity}

To study the regularity, we need to recall the following technical result from \cite{LMMT}, Lemma 5.1.
\begin{Lem}[\cite{LMMT}]\label{lemreg}
Let $V \in \L(X)$ such that $\dis 0 \in \rho(I+V)$. Then, there exists \mbox{$W \in \L(X)$} such that $$(I+V)^{-1} = I - W,$$ 
and $W(X) \subset V(X)$. Moreover, if $T$ is a linear operator in $X$ such that $V(X) \subset D(T)$ and for $\psi \in D(T)$, \mbox{$TV \psi=VT \psi$}, then $WT \psi = TW \psi$.
\end{Lem}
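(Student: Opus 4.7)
The plan is to take the obvious candidate $W := I - (I+V)^{-1}$, which lies in $\L(X)$ because $0 \in \rho(I+V)$ implies $(I+V)^{-1} \in \L(X)$, and then to extract everything algebraically from the resolvent identity. First I would rewrite this $W$ in a more convenient form: from $(I+V)^{-1}(I+V) = I$ one gets $(I+V)^{-1} + (I+V)^{-1}V = I$, so $W = (I+V)^{-1}V$; and since $V$ commutes with $I+V$ it commutes with $(I+V)^{-1}$, yielding also $W = V(I+V)^{-1}$. This second expression immediately gives $W(X) = V\bigl((I+V)^{-1}(X)\bigr) \subset V(X)$, which is the first claim.

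For the commutation statement, the key intermediate step is to show that $(I+V)^{-1}$ preserves $D(T)$ and commutes with $T$ on $D(T)$. Given $\psi \in D(T)$, I would observe that $W\psi = V(I+V)^{-1}\psi \in V(X) \subset D(T)$, and hence $(I+V)^{-1}\psi = \psi - W\psi \in D(T)$ as well. Then I would apply $T$ to the identity $\psi = (I+V)^{-1}\psi + V(I+V)^{-1}\psi$: since $(I+V)^{-1}\psi \in D(T)$, the commutation hypothesis $TV = VT$ on $D(T)$ can be used at this element, giving
\begin{equation*}
T\psi \;=\; T(I+V)^{-1}\psi + TV(I+V)^{-1}\psi \;=\; T(I+V)^{-1}\psi + VT(I+V)^{-1}\psi \;=\; (I+V)\,T(I+V)^{-1}\psi.
\end{equation*}
Applying $(I+V)^{-1}$ on the left yields $T(I+V)^{-1}\psi = (I+V)^{-1}T\psi$, i.e.\ the desired commutation on $D(T)$. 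Subtracting from $T\psi = T\psi$ then gives $TW\psi = WT\psi$.

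The only delicate bookkeeping item — and the one I would be most careful about — is the domain issue: one must justify that $(I+V)^{-1}\psi$ actually lies in $D(T)$ before applying $T$ to it, and this is precisely where the inclusion $V(X) \subset D(T)$ is used (through $W\psi \in D(T)$). Everything else is a purely algebraic manipulation with the Neumann-type identity $(I+V)^{-1} = I - V(I+V)^{-1}$, so no functional-analytic machinery beyond the boundedness of $(I+V)^{-1}$ is required.
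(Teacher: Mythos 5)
Your proof is correct and complete: the identification $W=V(I+V)^{-1}=(I+V)^{-1}V$ gives the range inclusion immediately, and you correctly handle the only delicate point, namely that $(I+V)^{-1}\psi=\psi-W\psi\in D(T)$ must be established (via $W(X)\subset V(X)\subset D(T)$) before the commutation hypothesis can be invoked at that element. The paper itself gives no proof of this lemma --- it simply cites Lemma~5.1 of \cite{LMMT} --- so there is nothing to compare against, but your argument is the standard one and fills that gap cleanly.
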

From \refP{prop inv det M}, system \eqref{syst trans M} has a unique solution $(\psi_1,\psi_2)$. From \refT{Th syst trans M}, it remains to prove that
$$\psi_1 \in \left(D(A), X\right)_{1+\frac{1}{2p},p} \quad \text{and} \quad \psi_2 \in \left(D(A), X\right)_{1+\frac{1}{2}+\frac{1}{2p},p}.$$
To this end, we have to study the regularity of the inverse of the determinant $\det(\Lambda)$.
\begin{Lem}
There exists $R_d \in D(M^\infty)$ such that
$$\left[\det(\Lambda)\right]^{-1} = -\frac{1}{16k_+k_-} M^{-1} + R_d.$$
\end{Lem}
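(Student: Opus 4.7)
The plan is to start from the explicit factorization \eqref{det 3 M}, namely $\det(\Lambda) = -16k_+k_-\,W^{-2} M F$, and invert it piece by piece. Since $U_\pm$, $V_\pm$ (hence $W$ and $W^{-2}$) and $F$ are built entirely out of $M$ and exponentials $e^{\alpha M}$, they commute with $M$ (on $D(M)$) and with $M^{-1}$ (which is bounded by $(H_2)$). Consequently
\[
[\det(\Lambda)]^{-1} = -\frac{1}{16k_+k_-}\,F^{-1} M^{-1} W^{2} = -\frac{1}{16k_+k_-}\,F^{-1} W^{2}\,M^{-1}.
\]
The target identity will then follow once I prove that $F^{-1}W^{2} = I + R$ for some $R \in \L(X)$ whose range lies in $D(M^\infty)$, since $M^{-1}$ preserves that range.

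First, I analyse $F$. Writing $F = I + V$ with $V := \sum_{j\in J}\frac{k_j}{16k_+k_-} M^{n_j} e^{\alpha_j M}$ and $\alpha_j > 0$, the analyticity of the semigroup generated by $M$ yields $e^{\alpha_j M}(X) \subset D(M^\infty)$, hence $V(X) \subset D(M^\infty)$ and $V$ commutes with $M$ on $D(M)$. \refP{prop inv F M} gives $0 \in \rho(I+V)$, so \refL{lemreg} applied with $T = M^k$ for each $k$ provides $W_F \in \L(X)$ with $F^{-1} = I - W_F$ and $W_F(X) \subset V(X) \subset D(M^\infty)$. Next, expanding the definitions \eqref{U V +-} of $U_\pm$ and $V_\pm$ gives $U_\pm = I + \Sigma_{U_\pm}$ and $V_\pm = I + \Sigma_{V_\pm}$, where each $\Sigma$ is a finite sum of terms of the form $kM^n e^{\alpha M}$ with $\alpha>0$. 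Multiplying out $W^2 = (U_+U_-V_+V_-)^2$ and collecting terms gives $W^2 = I + \Sigma_W$ where $\Sigma_W$ is again a finite sum of such regular terms, so $\Sigma_W(X) \subset D(M^\infty)$.

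Combining the two expansions,
\[
F^{-1} W^{2} = (I - W_F)(I + \Sigma_W) = I + \bigl(\Sigma_W - W_F - W_F \Sigma_W\bigr) =: I + R,
\]
and each of the three terms in $R$ has range contained in $D(M^\infty)$, so $R(X)\subset D(M^\infty)$. Setting
\[
R_d := -\frac{1}{16k_+k_-}\,R\,M^{-1} \in \L(X),
\]
and noting that $M^{-1}$ maps $D(M^\infty)$ into itself, we obtain $R_d(X)\subset D(M^\infty)$ and
\[
[\det(\Lambda)]^{-1} = -\frac{1}{16k_+k_-}\,M^{-1} + R_d,
\]
which is the claim. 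The only real subtlety I expect is bookkeeping the commutation of $M$, $M^{-1}$, $W$, $W^{-2}$ and $F^{\pm 1}$ on the appropriate domains; everything else is an algebraic manipulation combined with the semigroup regularization $e^{\alpha M}(X)\subset D(M^\infty)$ that turns every exponential remainder into a regular term.
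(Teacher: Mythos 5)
Your proof is correct and follows essentially the same route as the paper: invert the factorization $\det(\Lambda) = -16k_+k_-\,W^{-2}MF$ of \eqref{det 3 M}, use \refL{lemreg} to write $F^{-1}$ as identity plus a remainder with range in $D(M^\infty)$, use $e^{\delta M}(X)\subset D(M^\infty)$ to do the same for $W^2$, and absorb everything into $R_d$. Your version is in fact slightly more careful than the paper's about the commutation of $M^{-1}$ with $W^2$ and $F^{-1}$ and about stating the conclusion as $R_d(X)\subset D(M^\infty)$, but these are presentational refinements, not a different argument.
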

\begin{proof}
From \eqref{det 3 M} and \refP{prop inv det M}, we have
$$\left[\det(\Lambda)\right]^{-1} = -\frac{1}{16k_+k_-} M^{-1} W^2 F^{-1}.$$
From \refL{lemreg}, there exists $R_F \in D(M^\infty)$, such that
$$F^{-1} = I + R_F.$$
Moreover, for $\delta>0$, we know that
\begin{equation}\label{expMinfty M}
e^{\delta M} \psi \in D(M^\infty).
\end{equation}
Since $W = U_+ U_- V_+ V_-$, from \eqref{expMinfty M}, there exists $R_W \in D(M^\infty)$ such that
$$W^2 = I + R_W.$$
We deduce that there exists $R_d \in D(M^\infty)$, such that
$$\left[\det(\Lambda)\right]^{-1} = -\frac{1}{16k_+k_-} M^{-1} + R_d.$$
\end{proof}
Now, we study the regularity of $\psi_1$ and $\psi_2$. We recall that
$$\Lambda \Psi = S,$$
where $\Lambda$ is invertible from \refP{prop inv det M}. From \refL{lemreg}, there exist \mbox{$R_{U_\pm},R_{V_\pm} \in D(M^\infty)$}, such that
\begin{equation}\label{Upm-1 Vpm-1 M}
U_\pm^{-1}=I+R_{U_\pm}\qquad\text{and}\qquad V_\pm^{-1}=I+R_{V_\pm}.
\end{equation}
From \eqref{expMinfty M} and \eqref{Upm-1 Vpm-1 M}, there exist $R_i \in D(M^\infty)$, $i=1,2,3,4$, such that
$$\Lambda = \left(\begin{array}{cc}
2(k_+ + k_-)M + R_1 & -2(k_+ - k_-)I + R_2 \\ \ecart
2(k_+ - k_-)M + R_3 & -2(k_+ + k_-)I + R_4
\end{array}\right).$$
It follows that there exist $\R_1$, $\R_2 \in D(M^\infty)$, such that
\begin{equation}\label{syst psi1 psi2 M}
\left\{\begin{array}{lll}
\psi_1 & = & \dis \frac{(k_+ + k_-)}{8 k_+ k_-} M^{-1} S_1 - \frac{(k_+ - k_-)}{8 k_+ k_-} M^{-1} S_2 + \R_1 \\ \ecart

\psi_2 & = & \dis \frac{(k_+ - k_-)}{8 k_+ k_-} S_1 - \frac{(k_+ + k_-)}{8 k_+ k_-} S_2 + \R_2,
\end{array}\right.
\end{equation}
where $S_1$ is given by \eqref{S1 M} and $S_2$ is given by \eqref{S2 M}.

Since $F_+$ is a classical solution of problem $(P_+)$ and $F_-$ is a classical solution of problem $(P_-)$, then from \refR{Rem al- M} and \refR{Rem al+ M}, we obtain that $\check{S}$ defined by \eqref{R M} has the following regularity
\begin{equation}\label{reg R M}
\check{S} \in (D(M),X)_{\frac{1}{p},p}.
\end{equation}
Moreover, from \eqref{egalites espaces interpolation}, we have
$$\varphi_1^+, \varphi_1^- \in \left(D(A), X\right)_{1+\frac{1}{2p},p} = \left(D(M), X\right)_{3+\frac{1}{p},p}$$
and
$$\varphi_2^+, \varphi_2^- \in \left(D(A), X\right)_{1+\frac{1}{2}+\frac{1}{2p},p} = \left(D(M), X\right)_{2+\frac{1}{p},p}.$$
Thus, from \eqref{phi i tilde - M}, \eqref{phi i tilde + M}, \refL{lemreg}, \refR{Rem al- M} and \refR{Rem al+ M}, we deduce that
\begin{equation}\label{reg phi i tilde +- M}
\tilde{\varphi}_1^+,\tilde{\varphi}_1^-,\tilde{\varphi}_3^+,\tilde{\varphi}_3^- \in \left(D(M), X\right)_{3+\frac{1}{p},p} \quad \text{and} \quad \tilde{\varphi}_2^+,\tilde{\varphi}_2^-,\tilde{\varphi}_4^+,\tilde{\varphi}_4^- \in \left(D(M), X\right)_{2+\frac{1}{p},p}.
\end{equation}
So, from \eqref{S1 M}, \eqref{S2 M}, \eqref{expMinfty M}, \eqref{reg R M} and \eqref{reg phi i tilde +- M}, we have
\begin{equation}\label{reg S1 S2 M}
S_1 \in (D(M),X)_{2 + \frac{1}{p},p} \quad \text{and} \quad S_2 \in (D(M),X)_{2 + \frac{1}{p},p}.
\end{equation}
Finally, from \eqref{egalites espaces interpolation}, \eqref{syst psi1 psi2 M} and \eqref{reg S1 S2 M}, we obtain 
$$\psi_1 \in (D(M),X)_{3 + \frac{1}{p},p} = \left(D(A), X\right)_{1+\frac{1}{2p},p} \quad \text{and} \quad \psi_2 \in (D(M),X)_{2 + \frac{1}{p},p} = \left(D(A), X\right)_{1+\frac{1}{2}+\frac{1}{2p},p}.$$

\section*{Acknowledgments} 

This research is supported by CIFRE contract 2014/1307 with Qualiom Eco company and partially by the LMAH and the european funding ERDF through grant project Xterm. 

\noindent I would like to thank the referee for the valuable comments and corrections which help to improve this paper.


\end{document}